\date{}
\title{\vspace*{-36pt}
Bipolar orientations on planar maps and SLE$_{12}$ }
\author{
\hspace*{-200pt}\begin{tabular}{c}Richard Kenyon\\[-5pt]\small Brown\end{tabular}\hfill
\begin{tabular}{c}Jason Miller\\[-5pt]\small Cambridge\end{tabular}\hfill
\begin{tabular}{c}Scott Sheffield\\[-5pt]\small MIT\end{tabular}\hfill
\begin{tabular}{c}David B.\! Wilson\\[-5pt]\small Microsoft\end{tabular}\hspace*{-200pt}}
\tikzset{>=latex}
\newcommand{\old}[1]{}
\newcommand{\Var}{\operatorname{Var}}
\newcommand{\R}{{\mathbb R}}
\newcommand{\Z}{{\mathbb Z}}
\newcommand{\E}{{\mathbb E}}
\newcommand{\eps}{\varepsilon}
\newcommand{\be}{\begin{equation}}
\newcommand{\ee}{\end{equation}}
\newtheorem{theorem}{Theorem}
\newtheorem{conjecture}{Conjecture}
\newtheorem{prop}[theorem]{Proposition}
\newtheorem{corollary}[theorem]{Corollary}
\newtheorem{remark}{Remark}
\newcommand{\s}{{\mathbb S}}
\renewcommand{\C}{{\mathbb C}}
\newcommand{\wh}{\widehat}
\def\@rst #1 #2other{#1}
\newcommand\MR[1]{\relax\ifhmode\unskip\spacefactor3000 \space\fi
  \MRhref{\expandafter\@rst #1 other}{#1}}
\newcommand{\MRhref}[2]{\href{http://www.ams.org/mathscinet-getitem?mr=#1}{MR#1}}
\newcommand{\SLE}{\operatorname{SLE}}
\newcommand{\CLE}{\operatorname{CLE}}
\begin{document}

\maketitle
\begin{abstract}
  We give bijections between bipolar-oriented (acyclic with unique
  source and sink) planar maps and certain random walks, which show that
  the uniformly random bipolar-oriented
  planar map, decorated by the ``peano curve'' surrounding the tree of
  left-most paths to the sink, converges in law with respect to the
  peanosphere topology to a $\sqrt{4/3}$-Liouville quantum gravity
  surface decorated by an independent Schramm-Loewner evolution with
  parameter~$\kappa=12$ (i.e., $\SLE_{12}$).  This result is universal in the sense
  that it holds for bipolar-oriented triangulations, quadrangulations,
  $k$-angulations, and maps in which face sizes are mixed.
\end{abstract}

\section{Introduction}
\subsection{Planar maps}

A \textit{planar map\/} is a planar graph together with an embedding into $\R^2$ so that no two edges cross. More precisely, a planar map is an equivalence class of such embedded graphs, where two embedded graphs are said to be equivalent if there exists an orientation preserving homeomorphism $\R^2 \to \R^2$ which takes the first to the second.  The enumeration of planar maps started in the 1960's in work of Tutte \cite{tutte:planar-maps}, Mullin \cite{mullin:tree-maps}, and others.  In recent years, new combinatorial techniques for the analysis of random planar maps, notably via random matrices and tree bijections, have revitalized the field.  Some of these techniques were motivated
from physics, in particular from
conformal field theory and string theory.

There has been significant mathematical progress on the enumeration and scaling limits of random planar maps chosen uniformly from the set of all rooted planar maps with a given number of edges, beginning with the bijections of Cori--Vauquelin \cite{cori-vauquelin} and Schaeffer \cite{schaeffer} and progressing to the existence of Gromov--Hausdorff metric space limits established by Le Gall \cite{legall:bm} and Miermont \cite{miermont:bm}.

There has also emerged a large literature on
planar maps that come equipped with additional structure, such as the instance of a model from statistical physics, e.g., a uniform spanning tree, or an Ising model configuration.
  These ``decorated planar maps'' are important in
  Euclidean 2D statistical physics.  The reason is that it is often
  easier to compute ``critical exponents'' on planar maps than on
  deterministic lattices. Given the planar map exponents, one can
  apply the KPZ formula to \textit{predict\/} the analogous Euclidean
  exponents.\footnote{This idea was used by Duplantier to derive the so-called
  Brownian intersection exponents \cite{duplantier:brownian}, whose
  values were subsequently verified mathematically by Lawler, Schramm,
  and Werner \cite{LSW:brownian1,LSW:brownian2,LSW:brownian3} in an
  early triumph of Schramm's SLE theory \cite{schramm:sle}.  An
  overview with a long list of references can be found in
  \cite{duplantier-sheffield:KPZ}.}
  In this paper, we
consider random planar maps equipped with bipolar orientations.

\subsection{Bipolar and harmonic orientations}  \label{subsec::bporpm}
A \textit{bipolar (acyclic) orientation\/} of a graph~$G$ with specified source and sink (the ``poles'')
is an acyclic orientation of its edges with no source or sink except at the specified poles.
(A \textit{source\/} (resp.\ \textit{sink\/}) is a vertex with no incoming (resp.\ outgoing) edges.)
For any graph~$G$ with adjacent source and sink,
bipolar orientations are counted by
the coefficient of $x$ in the Tutte polynomial $T_G(x,y)$,
which also equals the coefficient
of $y$ in $T_G(x,y)$; see \cite{bipolarorientationsrevisited} or the overview in \cite{FPS}.
In particular, the number of bipolar orientations does not depend on the choice of source and sink as long as they are adjacent.
When the source and sink are adjacent, there are bipolar orientations
precisely when the graph is biconnected, i.e., remains connected after the removal of any vertex \cite{lempel-even-cederbaum}.
If the source and sink are not adjacent,
adjoining an edge between the source and sink does not affect the number of bipolar orientations,
so bipolar orientations are counted by these Tutte
coefficients in the augmented graph.

Let $G$ be a finite connected planar map, with no self-loops but with multiple edges
allowed, with a specified source and sink that are incident to the
same face.  It is convenient to embed $G$ in the disk so that the
source is at the bottom of the disk (and is denoted S, for south
pole), the sink is at the top (and is denoted N, for north pole), and all
other vertices are in the interior of the disk (see Figure~\ref{fig::ao3}).
Within the disk there are two faces that are boundary faces,
which can be called W (the west pole) and E (the east pole).
Endowing $G$ with a bipolar orientation is a way to endow it and its
dual map $G^*$ with a coherent notion of ``north, south, east, and west'':
one may define the directed edges to point \textit{north}, while their
opposites point \textit{south}.
Each primal edge has a face to its \textit{west\/} (left when facing north) and its \textit{east\/} (right), and dual edges are oriented in the westward direction (Figure~\ref{fig::ao3}).

\begin{figure}[t!!]
\begin{center}
\includegraphics[scale=1]{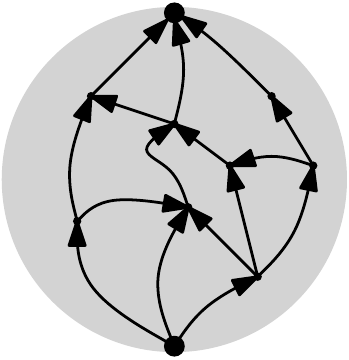}\hfill\includegraphics[scale=1]{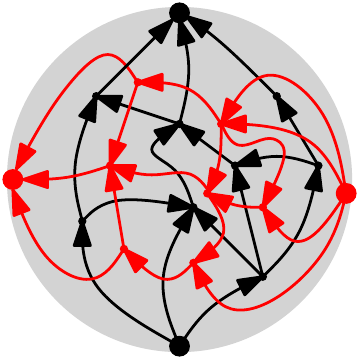}\hfill\includegraphics[scale=1]{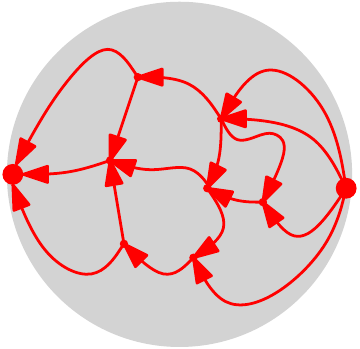}
\end{center}
\caption{\label{fig::ao3} {\bf Left:} A planar map embedded in a disk with two boundary vertices, with a north-going bipolar orientation. {\bf Right:} The dual bipolar-oriented planar map, which has two boundary dual vertices on the disk. {\bf Middle:} Primal and dual bipolar-oriented maps together.  The dual orientations are obtained from the primal orientations by rotating the arrows left.}
\end{figure}

\enlargethispage{12pt}
Given an orientation of a finite connected planar map $G$, its dual
orientation of $G^*$ is obtained by rotating directed edges
counterclockwise.  If an orientation has a sink or source at an
interior vertex, its dual has a cycle around that vertex.  Suppose an
orientation has a cycle but has no source or sink at interior
vertices.  If this cycle surrounds more than one face, then one can
find another cycle that surrounds fewer faces, so there is a cycle
surrounding just one face, and the dual orientation has either a
source or sink at that (interior) face.  Thus an orientation of $G$
is bipolar acyclic precisely when its dual orientation of $G^*$ is
bipolar acyclic.  The east and west poles of $G^*$ are the source
and sink respectively of the dual orientation (see Figure~\ref{fig::ao3}).

One way to construct bipolar orientations is via electrical networks.  Suppose every edge of $G$ represents a conductor with some generic positive conductance, the south pole is at 0 volts, and the north pole is at 1 volt.  The voltages are harmonic except at the boundary vertices, and for generic conductances, provided every vertex is part of a simple path connecting the two poles, the interior voltages are all distinct.  The \textit{harmonic orientation\/} orients each edge towards its higher-voltage endpoint.  The harmonic orientation is clearly acyclic, and by harmonicity, there are no sources or sinks at interior vertices.
In fact, for any planar graph with source and sink incident to the same face, \textit{any\/} bipolar orientation is the harmonic orientation for some suitable choice of conductances on the edges \cite[Thm.~1]{abrams-kenyon:fixed-energy}, so for this class of graphs, bipolar orientations are equivalent to harmonic orientations.

Suppose that a bipolar-oriented planar map~$G$ has an interior vertex
incident to at least four edges, which in cyclic order are oriented
outwards, inwards, outwards, inwards.  By the source-free sink-free
acyclic property, these edges could be extended to oriented paths
which reach the boundary, and by planarity and the acyclic property,
the paths would terminate at four distinct boundary vertices.  Since
(in this paper) we are assuming that there are only two boundary
vertices, no such interior vertices exist.  Thus at any interior
vertex, its incident edges in cyclic order consist of a single group
of north-going edges followed by a single group of south-going edges,
and dually, at each interior face the edges in cyclic order consist of
a single group of clockwise edges followed by a single group of
counterclockwise edges.

In particular, each vertex (other than the north pole) has a unique
``west-most north-going edge,''
which is its NW edge.  The \textit{NW tree\/} is the directed
tree which
maps each vertex (other than the north pole) to its NW edge, and maps
each edge to the vertex to its north.
Geometrically, the NW tree can be drawn so that each NW edge is entirely
in the NW tree, and for
each other edge, a segment containing the
north endpoint of the edge is in the NW tree (see Figure~\ref{fig::ao}).
We define southwest, southeast, and northeast trees similarly.

We will exhibit (see Theorems~\ref{marked-map} and \ref{general}) a
bijection between bipolar-oriented planar maps (with given face-degree
distribution) and certain types of random walks in the nonnegative
quadrant $\Z_{\geq 0}^2$.  This bijection leads to exact enumerative formulae
as well as local information about the maps such as degree
distributions.  For previous enumerative work on this model, including
bijections between bipolar-oriented planar maps and other objects, see
e.g. \cite{FPS,bousquet-melou:maps,BBMF:baxter-bipolar,FFNO:baxter}.

\subsection{SLE and LQG}
After the bijections our second main result is the identification of the scaling limit of the bipolar-oriented map
with a \textit{Liouville quantum gravity\/} (LQG) surface decorated by a \textit{Schramm-Loewner evolution\/} (SLE) curve,
see Theorem~\ref{scalinglimit}.

We will make use of the fact proved in \cite{DMS:mating,  miller-sheffield:finite-trees, GHMS:covariance} that an SLE-decorated LQG surface can be equivalently defined as a mating of a correlated pair of continuum random trees (a so-called \textit{peanosphere}; see Section~\ref{peanosphere}) where the correlation magnitude is determined by parameters that appear in the definition of LQG and SLE (namely $\gamma$ and $\kappa'$).

The scaling limit result can thus be formulated as the statement that a certain pair of discrete random trees determined by the bipolar orientation (namely the \textit{northwest\/} and \textit{southeast\/} trees, see Section~\ref{subsec::bporpm}) has, as a scaling limit, a certain correlated pair of continuum random trees.
Although LQG and SLE play a major role in our motivation and intuition (see Sections~\ref{peanosphere} and~\ref{imaginarysection}), we stress that no prior knowledge about these objects is necessary to understand either the main scaling limit result in the current paper or the combinatorial bijections behind its proof (Sections~\ref{sec::bijections} and~\ref{sec::triangle}).

Before we move on to the combinatorics, let us highlight another point about the SLE connection.
There are several special values of the parameters $\kappa$ and $\kappa'=16/\kappa$ that are related to discrete statistical physics models.
($\SLE_\kappa$ with $0<\kappa\le 4$ and $\SLE_{16/\kappa}$ are closely related \cite{zhan:duality,dubedat:duality,miller-sheffield:ig1,miller-sheffield:ig4}, which is known
as SLE-duality.)  These special $\{\kappa,\kappa'\}$ pairs include $\{2,8\}$ (for loop-erased random walk and the uniform spanning tree) \cite{LSW:tree}, $\{8/3,6\}$ (for percolation and Brownian motion) \cite{smirnov:percolation,LSW:brownian-frontier}, $\{3,16/3\}$ (for the Ising and FK-Ising model) \cite{smirnov:ising,CDCHKS:ising}, and
$\{4,4\}$ (for the Gaussian free field contours)
\cite{schramm-sheffield:discrete-GFF,schramm-sheffield:continuum-GFF}.
The relationships between these special $\{\kappa, \kappa'\}$ values and the corresponding discrete models were all discovered
or
 conjectured within a couple of years of Schramm's introduction of SLE,
building on earlier arguments from the physics literature.  We note that all of these relationships have random planar map analogs, and
that they all correspond to $\{\kappa, \kappa' \} \subset [2,8]$. This range is significant because the so-called \textit{conformal loop ensembles\/} $\CLE_\kappa$ \cite{sheffield:cle-trees,sheffield-werner:markovian} are only defined for $\kappa \in (8/3,8]$, and the discrete models mentioned above are all related to random collections of loops in some way, and hence have either $\kappa$ or $\kappa'$ in the range $(8/3,8]$.  Furthermore, it has long been known that ordinary $\SLE_{\kappa}$ does not have time reversal symmetry when $\kappa > 8$ \cite{rs:k_8_reverse} (see \cite{miller-sheffield:ig4} for the law of the time-reversal of such an $\SLE_\kappa$ process),
and it was thus widely assumed that discrete statistical physics systems
would not converge to $\SLE_\kappa$ for $\kappa>8$ \cite{Cardy}.

In this paper the relevant $\{\kappa, \kappa' \}$ pair is $\{4/3,12\}$. This special pair is interesting
in part because it lies outside the range $[2,8]$.
It has been proposed, based on heuristic arguments and simulations, that ``activity-weighted'' spanning trees
should have SLE scaling limits with $\kappa$ anywhere in the range $[4/3,4)$ and $\kappa'$ anywhere in the range $(4,12]$
\cite{kassel-wilson:active}.
In more recent work, subsequent to our work on bipolar orientations,
using a generalization of the inventory accumulation model in \cite{sheffield:burgers},
the activity-weighted spanning trees on planar maps were shown to converge to SLE-decorated LQG in the peanosphere topology for
this range of $\kappa,\kappa'$ \cite{GKMW:active-tree-map}.

We will further observe that if one modifies the bipolar orientation model by a weighting that makes the faces more or less balanced (in terms of their number of clockwise and counterclockwise oriented boundary edges), one can obtain any $\kappa \in (0,2)$ and any $\kappa' \in (8,\infty)$.
In a companion to the current paper \cite{KMSW2}, we discuss
a different generalization of bipolar orientations that we conjecture
gives SLE for $\kappa\in[12-8\sqrt{2},4)$ and $\kappa'\in(4,12+8\sqrt{2}]$.

In this article we consider an opposite pair of trees (NW-tree and SE-tree).
It is also possible to consider convergence of all four trees (NW, SE, NE, and SW) simultaneously:
this is done in the recent article \cite{GNS:bipolar}.

\subsection{Outline}

In Sections~\ref{sec::bijections} and~\ref{sec::triangle} we establish our combinatorial results and describe the scaling limits of the NW and SE trees in terms of a certain two-dimensional Brownian excursion. In Section~\ref{sec::peanosphere}
we explain how this implies that the uniformly random bipolar-oriented map with $n$ edges,
and fixed face-degree distribution, decorated by its NW tree, converges in law as $n \to \infty$ to a $\sqrt{4/3}$-Liouville quantum gravity sphere decorated by space-filling $\SLE_{12}$ from $\infty$ to $\infty$.
This means that, following the curve which winds around the NW tree,  the distances to the N and S poles scale to an appropriately correlated pair of Brownian excursions.
We also prove a corresponding \textit{universality\/} result: the above scaling limit holds for essentially any distribution on face degrees (or, dually, vertex degrees) of the random map.

In Section~\ref{imaginarysection} we explain, using the so-called imaginary geometry theory, what is special about the value $\kappa'=12$. These observations allow us to explain at a heuristic level why (even before doing any discrete calculations) one would \textit{expect\/} $\kappa'=12$ to arise as the scaling limit of bipolar orientations.

\medskip
\enlargethispage{18pt}

\noindent{\bf{Acknowledgements.}}
R.K.\ was supported by NSF grant DMS-1208191 and Simons Foundation grant 327929.
J.M.\ was supported by NSF grant DMS-1204894.
S.S.\ was supported by a Simons Foundation grant,
NSF grant DMS-1209044, and EPSRC grants {EP/L018896/1} and {EP/I03372X/1}.
We thank the Isaac Newton Institute for Mathematical Sciences
for its support and hospitality during the program on Random Geometry,
where this work was initiated. We thank Nina Holden for comments on a draft of this paper.

\section{Bipolar-oriented maps and lattice paths} \label{sec::bijections}

\subsection{From bipolar maps to lattice paths}

For the bipolar-oriented planar map in Figure~\ref{fig::ao3},
Figure~\ref{fig::ao} illustrates its \textit{NW tree\/} (in red), \textit{SE tree\/} (in blue), and the \textit{interface path\/} (in green) which winds between them from the south pole to the north pole.
The interface path
has two types of steps:
\begin{enumerate}
\item Steps that traverse an edge (between red and blue sides).
\item Steps that traverse an interior face from its maximum to its minimum.
Face steps can be subcategorized according to the number of edges on the west and east sides of the face, where the maximum and minimum vertex of a face separate its west from its east.  If a face has $i+1$ edges on its west and $j+1$ edges on its east, we say that it is of type $(i,j)$.
\end{enumerate}
Observe that each face step has edge steps immediately before and after it.

\begin{figure}[t!!]
\begin{center}
\includegraphics[scale=1.6]{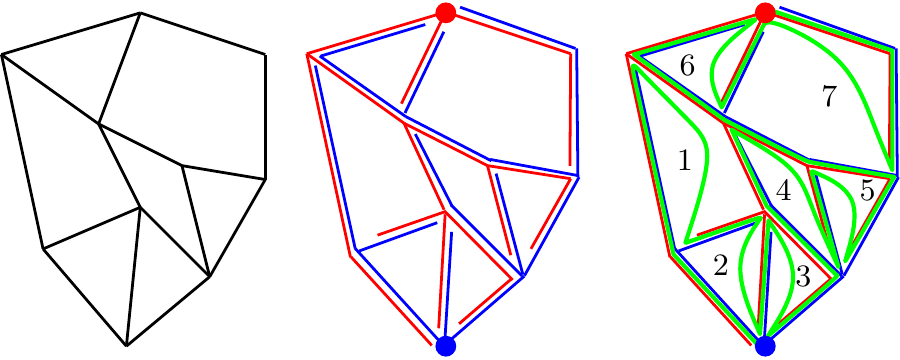}
\caption {\label{fig::ao} {\bf Left:} A map with a bipolar orientation, embedded so each edge is oriented ``upward'' (i.e., in the direction along which the vertical coordinate increases).
{\bf Middle:} Set of oriented edges can be understood as a tree, the \textit{northwest tree},
where the parent of each edge is the leftmost upward oriented edge it can merge into.  If we reverse the orientations of all edges, we can define an analogous tree (blue) and embed both trees (using the British convention of driving on the left side) so that they don't cross each other. {\bf Right:} We then add a green path tracing the interface between the two trees. Each edge of the interface
moves along an edge of the map
or across a face of the map.
For illustration purposes, faces are numbered by the order they are traversed by the green path,
but it is the traversals of the edges of the green path that correspond to steps of the lattice path.
}
\end{center}
\end{figure}

Let $E$ be the set of edges of the planar map, which we order $e_0,\ldots,e_{|E|-1}$ according to the green path going from
the south pole S to the north pole N.  For each edge $e_t$, let $X_t$ be distance in the blue tree from the blue root (S) to the
lower endpoint of $e_t$, and let $Y_t$ be the distance in the red tree from the red root (N) to the upper endpoint of $e_t$.
Suppose the west outer face has $m+1$ edges and the east outer face has $n+1$ edges.  Then the sequence $\{(X_t,Y_t)\}_{0\leq t\leq|E|-1}$ defines a walk or lattice path that starts at $(0,m)$ when $t=0$ and ends at $(n,0)$ when $t=|E|-1$, and which remains in the nonnegative quadrant.  If there is no face step between $e_t$ and $e_{t+1}$, then the walk's increment $(X_{t+1},Y_{t+1})-(X_t,Y_t)$ is $(1,-1)$.  Otherwise there is exactly one face step between $e_t$ and $e_{t+1}$; if that face has $i+1$ edges on its west and $j+1$ edges on its east, then the walk's increment is $(-i,j)$, see Figure~\ref{fig::stepdiagram}.

\begin{figure}[ht!!]
\begin{center}
\includegraphics[scale=1]{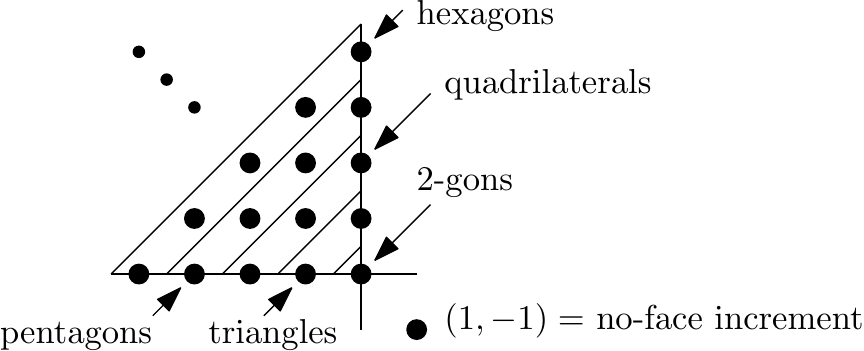}
\caption {\label{fig::stepdiagram} Lattice path increments.}
\end{center}
\end{figure}

For the example in Figure~\ref{fig::ao}, the walk starts at $(0,2)$ and ends at $(3,0)$.

\subsection{From lattice paths to bipolar maps}
The above construction can be reversed, constructing a
bipolar-oriented planar map from a lattice path of the above type.

We construct the bipolar-oriented planar map by sewing edges and
oriented polygons according to the sequence of steps of the path.  Let
$m_{i,j}$ denote a step of $(-i,j)$ with $i,j\geq 0$, and $m_e$ denote
a step of $(1,-1)$.

It is convenient to extend the bijection, so that
it can be applied to any sequence of these steps, not just those corresponding to walks remaining in the quadrant.  These steps give
sewing instructions to augment the current ``marked bipolar map'', which will be a slightly more general object.

A marked bipolar map is a bipolar-oriented planar map together with a ``start
vertex'' on its western boundary which is not at the top, and an
``active vertex'' on its eastern boundary which is not at the bottom,
such that the start vertex and every vertex below it on the western
boundary has at most one downward edge, and the active vertex and
every vertex above it on the eastern boundary has at most one upward
edge.  We think of the edges on the western boundary below the start
vertex and on the eastern boundary above the active vertex as being
``missing'' from the marked bipolar map: they are boundaries of open faces that
are part of the map, but are not themselves in the map.

\begin{figure}[htbp]
  \includegraphics[width=\textwidth]{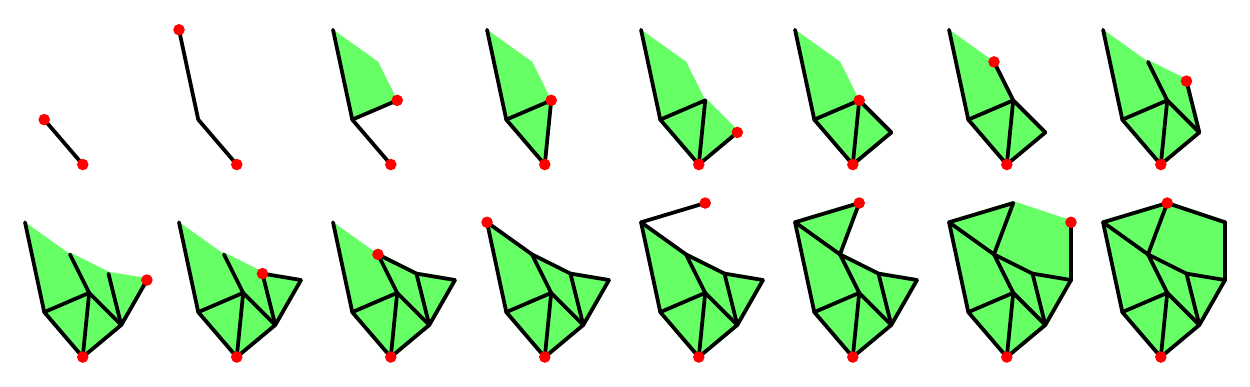}
  \caption{\label{sew} The process of sewing oriented polygons and
    edges to obtain a bipolar-oriented planar map.  The intermediate
    structures are marked bipolar-oriented planar maps, which may have
    some edges missing on the boundaries. The sequence of steps is:
    $m_e, m_{0,2},m_{1,0},m_{0,1},m_e,m_e,m_{1,1},m_{0,1},m_e,m_e,m_e,m_e,m_{1,0},m_{2,1},m_e$.}
\end{figure}

Initially the marked bipolar map consists of an oriented edge whose lower
endpoint is the start vertex and whose upper endpoint is the active
vertex.  Each $m_e$ and $m_{i,j}$ move adds exactly one edge to the
marked bipolar map.  The $m_{i,j}$ moves also add an open face.

The $m_e$ moves will sew an edge to the current marked bipolar map
upwards from the active vertex and move the active vertex to the upper
endpoint of the new edge.  If the eastern boundary had a vertex above
the active vertex, the new edge gets sewn to the southernmost missing
edge on the eastern boundary, and otherwise there is a new vertex
which becomes the current top vertex.

The $m_{i,j}$ moves will sew an open
face with $i+1$ edges on its west and $j+1$ edges on its east, sewing
the north of the face to the active vertex and the west of the face to
the eastern boundary of the marked bipolar map, and then sew an edge to the
southernmost east edge of the new face; the new active vertex is the upper vertex of this edge.
We can think of $m_{i,j}$ as being composed of two submoves,
a move $f_{i,j}$ which sews the open polygon to the structure,
with the top of the polygon at the old active vertex, and with
the new active vertex at the bottom of the polygon, followed by
a regular $m_e$ move.
If there are fewer than $i+1$
edges below the (old) active vertex, then the new face gets sewn to as many
of them as there are, the start vertex is no longer at the bottom, and
the remaining western edges of the face are missing from the map. As seen in the proof of Theorem \ref{general} below,
this happens when the walk goes out of the positive quadrant; these western edges will remain missing for any subsequent steps.

The final marked bipolar map is considered a (unmarked) bipolar-oriented
planar map if the start vertex is at the south and the active vertex
is at the north, or equivalently, if there are no missing edges.

\begin{theorem}\label{marked-map}
  The above mapping from sequences of moves from $\{m_e\}\cup\{m_{i,j}:i,j\geq0\}$
  to marked bipolar maps is a bijection.
\end{theorem}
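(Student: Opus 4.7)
The plan is to establish bijectivity by describing the inverse map explicitly. Given a marked bipolar map $M$, I would extract its sequence of moves by traversing the interface path (the analog of the green path of Figure~\ref{fig::ao}) starting at the start vertex and ending at the active vertex: the edges and faces of $M$ are visited in a canonical order, and each visit encodes one of the moves in the alphabet. Specifically, each edge step of the interface path becomes an $m_e$, while each face step crossing a face with $i+1$ west-boundary edges and $j+1$ east-boundary edges, together with the immediately following edge step, becomes an $m_{i,j}$.

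I would proceed by induction on the length $k$ of the input sequence. The base case $k=0$ is the trivial marked bipolar map consisting of the single oriented edge between start and active vertices; its interface path is a single edge step and it corresponds unambiguously to the empty sequence. For the inductive step, if $M_k$ is the marked bipolar map produced by the first $k$ moves and $M_{k+1}$ is produced by appending one more move, I would verify (a) that $M_k$ is recovered inside $M_{k+1}$ by removing the last-added edge (and, if the appended move was an $m_{i,j}$, also the last-added open face), and (b) that the interface path of $M_{k+1}$ is obtained from that of $M_k$ by appending a single $m_e$ step, or an $f_{i,j}$ step followed by an $m_e$ step, matching the appended move. The key structural fact powering both (a) and (b) is that the active vertex of $M_k$ is always the current tip of the interface path, so that sewing a new edge or face at the active vertex corresponds exactly to extending the interface path by the appropriate new step or pair of steps. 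Together these establish both injectivity of the forward map and correctness of the candidate inverse.

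The main obstacle will be verifying the well-definedness of the intermediate structures: each partial map produced by the sewing process must satisfy the definition of a marked bipolar map (an acyclic orientation with no interior source or sink, together with the conditions on the start and active vertices and on the downward/upward boundary edges below and above them). In particular, when an $m_{i,j}$ move sews the new face to fewer than $i+1$ edges on the current west boundary --- precisely the case in which the associated lattice walk would exit the nonnegative quadrant --- one must confirm that the start vertex shifts upward as prescribed, that the newly created western missing edges remain permanently missing (since all subsequent sewing operations only interact with the eastern boundary above the active vertex), and that the resulting map still admits a well-defined interface path from start to active vertex with the claimed step sequence. Once these bookkeeping details are unpacked, both the inverse construction and the induction go through by inspection.
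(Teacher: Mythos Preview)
Your approach is broadly workable, but it is considerably more elaborate than what the paper does, and it leaves surjectivity only implicitly addressed. The paper's proof is a short ``peel off the last move'' argument: given a marked bipolar map, look at the easternmost downward edge from the active vertex; this edge is always the most recently sewn one, and it came from an $m_{i,j}$ move precisely when it is the southernmost edge on the eastern boundary of some face (otherwise it came from $m_e$). Removing that edge (and the face, if applicable) yields the preceding marked bipolar map. This local criterion immediately gives injectivity, and applying it repeatedly to an \emph{arbitrary} marked bipolar map until only the initial edge remains gives surjectivity.

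By contrast, your plan passes through a global object---an interface path for general marked bipolar maps with missing edges---which the paper never defines and which you would have to set up carefully (e.g., how do the NW and SE trees behave near missing boundary edges, and why does the path run exactly from the start vertex to the active vertex?). Your induction on the length of the input sequence shows that the interface-path reading recovers the sequence on the image of the forward map, i.e., injectivity; but to get a bijection you still need the other direction, that applying the forward map to the interface-path reading of an \emph{arbitrary} marked bipolar map returns that map. You gesture at this (``correctness of the candidate inverse'') but your stated induction does not prove it. Finally, your ``main obstacle''---checking that each intermediate sewing output is a valid marked bipolar map---is a well-definedness issue for the forward map rather than an ingredient in the bijectivity argument; the paper simply takes the sewing rules as the definition and does not revisit this. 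None of these are fatal, but the paper's local last-edge criterion short-circuits all of them.
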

\begin{proof}
  Consider a marked bipolar map obtained from a sequence of moves.
  The number of edges present in the structure determines the length
  of the sequence.  If that length is positive, then the easternmost
  downward edge from the active vertex was the last edge adjoined to
  the structure.  If this edge is the southernmost edge on the eastern
  boundary of a face, then the last move was one of the $m_{i,j}$'s, and
  otherwise it was $m_e$.  Since the last move and preceding structure
  can be recovered, the mapping is an injection.

  Starting from an arbitrary marked bipolar map, we can inductively
  define a sequence of moves as above by considering the easternmost
  downward edge from the active vertex.  This sequence of moves yields
  the original marked bipolar map, so the mapping is a surjection.
\end{proof}

Next we restrict this bijection to sequences of moves which give valid
bipolar-oriented planar maps.  A sequence of moves can of course be
encoded as a path.

\begin{theorem}\label{general}
  The above mapping gives a bijection from length-$(\ell-1)$ paths
  from $(0,m)$ to $(n,0)$ in the nonnegative quadrant having
  increments $(1,-1)$ and $(-i,j)$ with $i,j\geq 0$, to
  bipolar-oriented planar maps with $\ell$ total edges and $m+1$ and
  $n+1$ edges on the west and east boundaries respectively.  A step of
  $(-i,j)$ in the walk corresponds to a face with degree $i+j+2$ in
  the planar map.
\end{theorem}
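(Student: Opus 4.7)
The plan is to derive Theorem~\ref{general} from Theorem~\ref{marked-map} by identifying which move sequences produce honest (unmarked) bipolar-oriented planar maps with the prescribed boundary sizes, and by verifying that these are exactly the walks described. First I would track walk coordinates throughout the sewing: at each step $t$, let $X_t$ be the number of present edges on the eastern boundary strictly below the active vertex and let $Y_t$ measure the corresponding quantity on the west, with offsets chosen so that the initial single-edge configuration is assigned coordinates $(0,m)$ (the $m$ recording the number of west-boundary edges the walk is committed to add before landing at its final position) and so that a final configuration with no missing edges coincides with the walk landing at $(n,0)$. A direct inspection of the sewing rules then confirms that an $m_e$ step produces walk increment $(+1,-1)$---a new edge lengthens the east path below the new active vertex by one while consuming one pending west-boundary slot---and that an $m_{i,j}$ step produces $(-i,+j)$, since the sewn face removes $i$ east edges from below the active vertex and contributes $j$ new east-boundary edges above it.

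The heart of the proof is the equivalence: the walk stays in $\Z_{\geq 0}^2$ at every time $t$ if and only if no missing edges are ever created during the sewing. An $m_{i,j}$ move generates missing west edges precisely when $i$ exceeds the number of east-boundary edges currently available below the active vertex, which is exactly the condition under which $X_t$ would become negative; once a missing west edge appears, the start vertex is no longer at S, and the sewing rule forbids it from returning, so the walk cannot recover a valid bipolar map. A symmetric argument, using that $Y_t$ measures the remaining sewing room on the west above the start vertex, shows that $Y_t<0$ corresponds to missing east edges being created. Hence walks staying in the nonnegative quadrant biject (via Theorem~\ref{marked-map}) with honest bipolar-oriented maps, and the endpoint $(n,0)$ encodes exactly that the final east boundary has $n+1$ edges and the west boundary has $m+1$ edges.

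Finally, the face-degree statement is immediate from the sewing rule: each $m_{i,j}$ move adjoins exactly one interior face with $i+1$ west edges and $j+1$ east edges, for a total degree of $i+j+2$, which matches the walk increment $(-i,j)$. The main obstacle is the quadrant-versus-missing-edges equivalence, which requires a careful case analysis of how the sewing interacts with the east and west boundaries---in particular, as the active vertex ascends past the topmost existing vertex on the east side and as $m_{i,j}$ moves with large $i$ consume long stretches of the eastern boundary. Once this equivalence is in place, injectivity and surjectivity of the restricted map follow directly from Theorem~\ref{marked-map}.
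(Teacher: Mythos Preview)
Your overall strategy---deduce Theorem~\ref{general} from Theorem~\ref{marked-map} by tracking walk coordinates through the sewing and showing that the quadrant constraint is exactly the ``no missing edges'' condition---is the same as the paper's, and your treatment of the $X$-coordinate (that $X_t<0$ first occurs exactly when an $m_{i,j}$ move overruns the eastern boundary and creates missing western edges) is correct.

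The gap is in your treatment of the $Y$-coordinate. You write that ``a symmetric argument \ldots\ shows that $Y_t<0$ corresponds to missing east edges being created,'' but the construction is \emph{not} symmetric in this sense. Missing eastern edges are created routinely: every $m_{i,j}$ with $j\ge 1$ produces $j$ missing east edges above the new active vertex, and these are later filled in by $m_e$ moves without $Y_t$ ever going negative. (Your own description of the $m_e$ increment---``consuming one pending west-boundary slot''---already misses the case where $m_e$ fills a missing east edge and the west boundary is untouched.) What must be shown is that the \emph{final} structure has no missing east edges, i.e., that the active vertex ends at the north pole; this is not a local condition detectable the first time $Y_t$ would dip, and your ``case analysis of how the sewing interacts with the east and west boundaries'' does not address it.

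The paper supplies the missing idea via a time-reversal symmetry: reversing the move sequence while replacing each $m_{i,j}$ by $m_{j,i}$ produces the same marked bipolar map rotated by $180^\circ$, with the roles of start and active vertex exchanged. Under this reversal the second coordinate of the original walk becomes (up to a shift) the first coordinate of the reversed walk, so the already-proved statement ``$X\ge 0$ throughout $\Leftrightarrow$ start vertex stays at the south pole'' applied to the reversed walk yields ``$Y$ attains its minimum at the last step $\Leftrightarrow$ active vertex ends at the north pole,'' together with the west-boundary count $m+1$. Combined with $Y_{\ell-1}=0$, this is exactly $Y_t\ge 0$ for all $t$. That reversal is the step your outline is missing.
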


\begin{proof}
  When we make a walk $(X_t,Y_t)_{t\geq 0}$ in $\Z^2$ started from $(X_0,Y_0)$
   using these moves, not necessarily confined to the quadrant, by induction
\[\begin{aligned}
X_t-X_0 = -1\,+\,
&\text{(\# non-missing edges on the eastern boundary)}\\
-\,&\text{(\# missing edges on the western boundary)}
\end{aligned}\]
and
\[\begin{aligned}
Y_t-Y_0 = 1\,+\,
&\text{(\# missing edges on the eastern boundary)}\\
-\,&\text{(\# non-missing edges on the western boundary)}\,.
\end{aligned}\]

  When the walk is started at $(0,m)$, the
  start vertex remains at the south pole precisely when the first coordinate
  always remains nonnegative.  In this case, there are no missing edges on the western boundary, so the final number of non-missing edges
  on the eastern boundary is $n+1$.

  Suppose that we reverse the sequence of moves, and replace each $m_{i,j}$
  with $m_{j,i}$, to obtain a new sequence.  Write each $m_{i,j}$
  as the face move $f_{i,j}$ followed by $m_e$.  Recall that the initial structure
  was an edge; we may instead view the intial structure as a vertex
  followed by an $m_e$ move.
  Written in this way, if the old sequence is $m_e^{k_0+1} f_{i_1,j_1} m_e^{k_1+1} f_{i_2,j_2}\cdots m_e^{k_q+1}$,
  the new sequence is $m_e^{k_q+1} \cdots f_{j_2,i_2} m_e^{k_1+1} f_{j_1,i_1} m_e^{k_0+1}$.
  We then see that the structure obtained from the new
  sequence is the same as the structure obtained from the old
  sequence but rotated by $180^\circ$, and with the roles of start
  and active vertices reversed.

  Using this reversal symmetry with our previous observation,
  it follows that the active vertex is at the north pole
  precisely when the second coordinate achieves its minimum
  on the last step (it may also achieve its minimum earlier),
  and the number of (non-missing) edges on the western boundary is $m+1$.
\end{proof}

If we wish to restrict the face degrees, the bijection continues to
hold simply by restricting the set of allowed steps of the paths.

We can use the bijection to prove the following result:
\begin{theorem}
  Any finite bipolar-oriented planar map which has no self-loops or
  pairs of vertices connected by multiple edges has a straight-line
  planar embedding such that edges are oriented upwards, i.e., in the
  direction of increasing $y$-coordinate, as in Figure~\ref{fig::ao}.
\end{theorem}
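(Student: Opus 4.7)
The plan is to produce an explicit integer-coordinate straight-line embedding by using data extracted from the bijection of Theorem~\ref{general}.

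For the vertical coordinate, I would set $y(v)$ equal to the length of a longest directed path from $S$ to $v$. Because the orientation is acyclic with unique source $S$, this is well-defined and finite, and for any directed edge $u\to w$ we have $y(w)\ge y(u)+1$, so every edge points strictly upward.

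For the horizontal coordinate, I would read off $x(v)$ from the green interface curve. For each non-$N$ vertex $v$, let $\tau(v)$ be the first $t$ such that $e_t$ has lower endpoint $v$, and set $x(v)=X_{\tau(v)}$; equivalently, $x(v)$ is the SE-tree distance from $S$ to $v$. Define $x(N)$ analogously from the terminal position of the lattice path. If two distinct vertices collide under $(x,y)$, a small perturbation from a secondary ordering (for instance a DFS index in the NW tree) separates them. The hypothesis of no loops and no multi-edges is essential here: it guarantees that vertices can be perturbed into distinct positions without forcing any two edges to coincide as line segments.

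The main obstacle is verifying planarity, i.e.\ that no two non-incident edges cross. I would prove this by induction following the sewing procedure in the proof of Theorem~\ref{marked-map}: each $m_e$ or $m_{i,j}$ move adjoins exactly one new edge (and possibly a new polygonal face) to the running marked bipolar-oriented map, and one checks that inserting the corresponding new straight segment is compatible with the existing placement and respects the cyclic order of edges at every vertex. The placement rule for $x$ makes this local check tractable because each move alters $(X_t,Y_t)$ in a controlled way. Once the cyclic orders at every vertex in the straight-line picture are shown to match those of the planar embedding, a F\'ary-type argument yields non-crossing. This local rotation/compatibility check at each sewing step is the technically delicate part of the proof.
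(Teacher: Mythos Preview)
Your plan shares with the paper the idea of running an induction along the sewing procedure of Theorem~\ref{marked-map}, but the execution diverges in a way that leaves a real gap.

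The central problem is the sentence ``Once the cyclic orders at every vertex in the straight-line picture are shown to match those of the planar embedding, a F\'ary-type argument yields non-crossing.'' This is false: matching rotation systems at vertices is necessary but not sufficient for a straight-line drawing to be crossing-free. A $4$-cycle with vertices at $(0,0),(1,1),(1,0),(0,1)$ already shows this --- the cyclic order at each degree-$2$ vertex is trivially correct, yet two edges cross. What you actually need in the inductive step is a \emph{geometric} guarantee that the newly inserted segment misses all previously drawn edges. Because your coordinates $(x(v),y(v))$ are fixed in advance (longest-path height and SE-tree distance, possibly perturbed), you have no freedom when a move joins two already-placed vertices: the segment is determined, and nothing in your setup rules out a crossing. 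The ``no multi-edges'' hypothesis enters your argument only to separate coincident points, which is far weaker than what is needed.

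The paper proceeds quite differently. It first \emph{reduces to triangulations} by adding diagonals inside any face of size $\ge 4$ (simplicity guarantees some diagonal is absent), so that only the moves $m_e$, $m_{1,0}$, $m_{0,1}$ occur. It then rebuilds the map from the walk with \emph{adaptive} placement, maintaining the invariant that any two vertices on the current right boundary either share an edge or have an unobstructed straight line-of-sight. The $m_{1,0}$ move joins two existing boundary vertices; simplicity (no multi-edges) ensures they are not already joined, hence by the invariant they see each other and the new segment is crossing-free. The $m_{0,1}$ move creates a new vertex, which is placed close enough to the edge it hangs off of that no existing sight-line is blocked. This adaptive freedom is exactly what your fixed-coordinate scheme lacks, and it is where the hypothesis is genuinely consumed.
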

\begin{proof}
If the bipolar-oriented planar map has a face with more than 3 sides,
then let $v_1,\ldots,v_4$ denote four of its vertices in cyclic order.
The map could contain the edges $(v_1,v_3)$ or $(v_2,v_4)$, embedded outside
the face, but it cannot contain both of them without violating planarity.
We may adjoin an edge which the graph does not already contain, embed
it within the face, and then orient it so that the augmented planar map
is bipolar-oriented.  By repeating this process, we see that we may assume
that the map is a triangulation.

Given a bipolar-oriented triangulation without multiple edges between
vertices, we can convert it to a walk using the bijection, and then convert it back to a bipolar triangulation again using the bijection.
When converting the walk back to a triangulation,
we do so while maintaining the following geometric property:
We require that every edge, missing or non-missing, be embedded as
a straight line oriented upwards.
We also require that every pair of vertices on the right boundary of the
closure of the structure have a ``line-of-sight'' to each other,
unless the structure
contains an edge (missing or non-missing) connecting them.  By ``having a line-of-sight'', we mean that the open line segment connecting the vertices is disjoint from the closure of the structure.

It's trivial to make the initial structure satisfy the geometric property.
Edge moves trivially maintain the geometric property.
Since the graph does not contain multiple edges connecting vertices,
the move $m_{1,0}$ (adjoining a leftward triangle) connects two vertices
that are within line-of-site, so it also maintains the geometric property.
The move $m_{0,1}$ adjoins a rightward triangle and necessarily makes the right boundary non-concave.  However, for any pair of vertices on the right boundary that are within line-of-sight of each other, we may place the new vertex of the triangle sufficiently close to its left edge that the line-of-sight is not obstructed, and since there are only finitely many pairs of vertices on the right boundary, we may embed the new triangle so that the geometric property is maintained.

By induction the final structure satisfies the geometric property, so it is
a straight-line embedding with edges oriented upwards.
\end{proof}

\subsection{Path scaling limit}\label{pathscalinglimit}

What happens if we consider a random bipolar-oriented planar map such as the one in Figure~\ref{fig::ao}, where we fix the left boundary length ($3$ in Figure~\ref{fig::ao}), the right boundary length ($4$ in Figure~\ref{fig::ao}), and the total number $\ell$ of edges ($16$ in Figure~\ref{fig::ao})?
We consider the limiting case where the boundary lengths are fixed and $\ell \to \infty$.  What can one say about the limiting joint law of the pair of trees in Figure~\ref{fig::ao} in this situation?

In light of Theorem~\ref{general}, understanding this limiting law amounts to understanding the limiting law of its associated lattice path. For example, if the map is required to be a triangulation, then the lattice path is required to have increments of size $(1,-1)$, $(-1,0)$, and $(0,1)$.  Since $\ell\to\infty$ with fixed endpoints, there are $\ell/3+O(1)$ steps of each type.  One can thus consider a random walk of length $\ell-1$ with these increment sizes (each chosen with probability $1/3$) conditioned to start and end at certain fixed values, and to stay in the nonnegative quadrant.

It is reasonable to expect that if a random walk on $\Z^2$ converges to Brownian motion with some non-degenerate diffusion matrix, then the same random walk conditioned to stay in a quadrant (starting and ending at
fixed locations
when the number of steps gets large)
should scale to a form of the Brownian excursion, i.e., a Brownian bridge constrained to stay in the same quadrant (starting and ending at $0$).
The recent work \cite[Theorem 4]{DurajWachtel} contains a precise theorem of this form,
and Proposition~\ref{prop::constrainedbrownianbridge} below is a special case of this theorem.  (The original theorem is for walks with a diagonal covariance matrix, but supported on a generic lattice, which implies Proposition~\ref{prop::constrainedbrownianbridge} after applying a linear transformation to the lattice.)

Recall that the period of a random walk on $\Z^2$ is the smallest integer $p\geq1$
such that the random walk has a positive probability to return to zero
after $kp$ steps for all sufficiently large integers $k > 0$.

\newcommand{\pstart}{z_{\text{start}}}
\newcommand{\pend}{z_{\text{end}}}
\begin{prop} \label{prop::constrainedbrownianbridge}
Let $\nu$ be a probability measure supported on $\Z^2$ with expectation zero and moments of all orders.
Let $p\geq1$ denote the period of the random walk on $\Z^2$ with step distribution $\nu$.
Suppose that for given $\pstart,\pend\in\Z_{\geq 0}^2$, for some $\ell$ there is a positive probability path in $\Z_{\geq0}^2$ from $\pstart$ to $\pend$ with $\ell$ steps from~$\nu$.
Suppose further that for any $R>0$ there is a point $z\in\Z_{\geq 0}^2$ that is distance at least $R$ from the boundary of the quadrant, such that there is a path from $\pstart$ to $z$ to $\pend$ with steps from~$\nu$ that remains in the quadrant $\Z_{\geq 0}^2$.
For sufficiently large $n$ with $n\equiv \ell\bmod p$, consider a random walk
$\pstart=S_0,S_1,\ldots,S_n=\pend$ from $\pstart$ to $\pend$ with increments chosen from~$\nu$,
conditioned to remain in the quadrant $\Z_{\geq 0}^2$.
Then the law of $S_{\lfloor nt \rfloor} /\sqrt{n}$ converges weakly w.r.t.\ the $L^\infty$ norm on $[0,1]$ to that of a Brownian excursion (with diffusion matrix given by the second moments of $\nu$) into the nonnegative quadrant, starting and ending at the origin, with unit time length.
\end{prop}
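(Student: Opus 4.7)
The plan is to deduce the proposition from the recent theorem of Duraj--Wachtel \cite[Theorem 4]{DurajWachtel}. That result treats random walks with \emph{diagonal} covariance supported on a general lattice in $\R^2$, conditioned to remain in the nonnegative quadrant, and establishes weak $\sqrt{n}$-scaling convergence to a Brownian excursion in the quadrant. Our walk has an arbitrary non-degenerate step distribution $\nu$ on $\Z^2$ --- note that the non-degeneracy of $\Sigma := \mathrm{Cov}(\nu)$ is forced by the reachability hypothesis, since a rank-one walk lies on a line and could never reach points arbitrarily far from the boundary of the quadrant. The idea is to change basis to diagonalize $\Sigma$, apply Duraj--Wachtel in the transformed coordinates, and push the limit back.

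Concretely, I would choose a linear map $T\colon \R^2 \to \R^2$ with $T \Sigma T^{\mathsf T}$ diagonal. Then $W_k := T S_k$ is a random walk on the lattice $L := T\Z^2$ with mean-zero, diagonal-covariance increments and finite moments of all orders, conditioned to lie in the cone $T(\R_{\geq 0}^2)$ and to go from $T\pstart$ to $T\pend$. The generic-lattice formulation of \cite[Theorem 4]{DurajWachtel} is precisely what allows one to absorb a linear change of variables of the quadrant into the lattice, so the transformed problem still fits their framework. The reachability conditions (existence of a feasible path, and a feasible path through a point far from the boundary) transfer directly under the linear bijection $T$, as does the period $p$, which is an intrinsic property of the walk.

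Applying Duraj--Wachtel to the transformed walk then yields convergence in $L^\infty([0,1])$ of $W_{\lfloor nt\rfloor}/\sqrt{n}$ to a Brownian excursion in $T(\R_{\geq 0}^2)$ with diagonal diffusion matrix $T \Sigma T^{\mathsf T}$, unit time length, and starting and ending at the origin. Because $T^{-1}$ is continuous linear, applying it termwise preserves the $L^\infty$ convergence and transforms the target into a Brownian excursion in the nonnegative quadrant with diffusion matrix $\Sigma$, which is exactly the claim of the proposition.

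The main obstacle is confirming that \cite[Theorem 4]{DurajWachtel} is stated (or at least proved) in sufficient generality to cover the image cone $T(\R_{\geq 0}^2)$, which is not the standard quadrant. One expects this to follow either directly from their ``generic lattice'' formulation --- where the freedom to choose the lattice exactly accounts for an arbitrary linear change of basis of the cone --- or from a transparent inspection of their argument, which proceeds via harmonic-function techniques for walks in cones that are insensitive to the cone's opening angle. A secondary, bookkeeping issue is matching our periodicity hypothesis $n \equiv \ell \bmod p$ with the periodicity assumption in \cite{DurajWachtel}; this amounts to checking that the period of the walk $W$ on the lattice $L$ coincides with $p$, which is immediate since $T$ is a bijection between the two step sets.
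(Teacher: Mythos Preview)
Your approach is exactly the one the paper takes: the paper states that Proposition~\ref{prop::constrainedbrownianbridge} is a special case of \cite[Theorem~4]{DurajWachtel}, noting that ``the original theorem is for walks with a diagonal covariance matrix, but supported on a generic lattice, which implies Proposition~\ref{prop::constrainedbrownianbridge} after applying a linear transformation to the lattice.'' Your concern about whether the transformed cone $T(\R_{\geq 0}^2)$ is admissible is resolved in the paper's very next paragraph, which explicitly discusses the angle of the cone $L(\R_{\geq 0}^2)$ and the associated exponent~$\alpha$---confirming that Duraj--Wachtel's result is stated for general cones, not just the standard quadrant.
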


In fact in this statement we do not need $\nu$ to have moments of all orders;
it suffices that $|\cdot|^\alpha$ has $\nu$-finite expectation, for a positive constant $\alpha$ defined in \cite{DurajWachtel}. The constant $\alpha$ depends on the angle of the cone $L (\R_{\geq 0}^2)$, where $L: \R^2 \to \R^2$ is a linear map for which $L(S_n)$ scales to a constant multiple of standard two-dimensional Brownian motion. In the setting of Theorems~\ref{thm::triangulationscalinglimit} and~\ref{thm:weighted-maps} below, $L$ can be the map that rescales the $(1,-1)$ direction by $1/\sqrt{3}$ and fixes the $(1,1)$ direction. In this case, the cone angle is $\pi/3$ and $\alpha = 3$.

The correlated Brownian excursion $(X,Y)$ in $\R_{\geq 0}^2$ referred to in the statement of Proposition~\ref{prop::constrainedbrownianbridge} is characterized by the Gibbs resampling property, which states that the following is true.  For any $0 < s < t < 1$, the conditional law of $(X,Y)$ in $[s,t]$ given its values in $[0,s]$ and $[t,1]$ is that of a correlated Brownian motion of time length $t-s$ starting from $(X(s),Y(s))$ and finishing at $(X(t),Y(t))$ conditioned on the positive probability event that it stays in $\R_{\geq 0}^2$.  The existence of this process follows from the results of \cite{shimura:cone}; see also \cite{Garbit}.

Now let us return to the study of random bipolar-oriented planar triangulations.
By Theorem~\ref{general} these correspond to paths in the nonnegative quadrant from the $y$-axis to the $x$-axis
which have increments of $(1,-1)$ and $(0,1)$ and $(-1,0)$.
Fix the boundary lengths $m+1$ and $n+1$, that is, fix the start $(0,m)$ and end $(n,0)$ of the walk,
and let the length $\ell$ get large.
Note that if $\nu$ is the uniform measure on the three values $(1,-1)$ and $(0,1)$ and $(-1,0)$, then the $\nu$-expectation of an increment $(X, Y)$ of the (unconstrained) walk is $(0,0)$.
Furthermore, (in the unconstrained walk) the variance of $X-Y$ is $2$ while the variance of $X+Y$ is $2/3$,
and the covariance of $X-Y$ and $X+Y$ is zero by symmetry.
Thus the variance in the $(1,-1)$ direction is $3$ times the variance in the $(1,1)$ direction.
The scaling limit of the random walk will thus be a Brownian motion with the corresponding covariance structure. We can summarize this information as follows:

\begin{theorem} \label{thm::triangulationscalinglimit}
Consider a uniformly random bipolar-oriented triangulation, sketched in the manner of Figure~\ref{fig::ao},
with fixed boundary lengths $m+1$ and $n+1$
and with the total number of edges given by $\ell$.
Let $S_0,S_1,\ldots,S_{\ell-1}$ be the corresponding lattice walk.
Then $S_{\lfloor \ell t \rfloor} / \sqrt{\ell}$ converges in law (weakly w.r.t.\ the $L^\infty$ norm on $[0,1]$), as $\ell \to \infty$ with $\ell\equiv -m-n+1\bmod 3$, to the Brownian excursion in the nonnegative quadrant starting and ending at the origin, with covariance matrix
$\begin{pmatrix}\sfrac{2}{3}&-\sfrac{1}{3}\\-\sfrac{1}{3}&\sfrac{2}{3}\end{pmatrix}$.
(This is the covariance matrix such that if the Brownian motion were unconstrained, the difference and sum of the two coordinates at time $1$ would be independent with respective variances $2$ and $2/3$.)
\end{theorem}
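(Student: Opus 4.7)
The plan is to reduce the statement, via the bijection in Theorem~\ref{general}, to the invariance principle in Proposition~\ref{prop::constrainedbrownianbridge}, applied to the step distribution $\nu$ which is uniform on the three atoms $\{(1,-1),(-1,0),(0,1)\}$.

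First I would translate the combinatorial setting to lattice walks. Every face of a triangulation has degree $3=i+j+2$ with $i,j\geq 0$, so the only face moves allowed in Theorem~\ref{general} are $m_{0,1}$ and $m_{1,0}$; together with the edge move $m_e$ the allowed increments are exactly the atoms of $\nu$. Thus uniformly random bipolar-oriented triangulations with $\ell$ edges and boundary lengths $m+1$, $n+1$ correspond bijectively to uniformly random length-$(\ell-1)$ lattice paths from $(0,m)$ to $(n,0)$ in $\Z_{\geq 0}^2$ with these three increments. Because $\nu^{\otimes(\ell-1)}$ assigns the same weight $3^{-(\ell-1)}$ to every deterministic sequence of $\ell-1$ steps, this uniform measure on paths is exactly the conditional distribution of a $\nu$-random walk started at $(0,m)$ given that it ends at $(n,0)$ after $\ell-1$ steps and never leaves $\Z_{\geq 0}^2$. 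This is precisely the setting of Proposition~\ref{prop::constrainedbrownianbridge}, with $\pstart=(0,m)$ and $\pend=(n,0)$.

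Next I would verify the hypotheses of that proposition. The involution $(x,y)\mapsto(-y,-x)$ permutes the three atoms of $\nu$, so $\nu$ has mean zero, and its compact support trivially gives moments of all orders. If a path uses $a$, $b$, $c$ steps of the three types, the system $a-b=n$, $c-a=-m$, $a+b+c=\ell-1$ forces $3a=\ell-1+m+n$; specializing to $m=n=0$ shows that the period of $\nu$ is exactly $p=3$, and the general case gives the theorem's congruence $\ell\equiv -m-n+1\pmod 3$. Admissible paths passing arbitrarily deep into the interior of the quadrant are easy to exhibit for large $\ell$: for example, alternate $(0,1)$ and $(1,-1)$ until reaching a point far from both axes, then return via a suitable combination of $(-1,0)$ and $(1,-1)$ moves.

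Finally, the second moments of $\nu$ compute to $\E[X^2]=\E[Y^2]=2/3$ and $\E[XY]=-1/3$, which is exactly the covariance matrix stated in the theorem. Proposition~\ref{prop::constrainedbrownianbridge} then delivers the weak convergence of $S_{\lfloor(\ell-1)t\rfloor}/\sqrt{\ell-1}$ to a Brownian excursion in $\R_{\geq 0}^2$ with that covariance, starting and ending at the origin (the fixed endpoints $(0,m)$ and $(n,0)$ are $O(1)$ and hence vanish under the $1/\sqrt{\ell}$ rescaling); replacing $\sqrt{\ell-1}$ by $\sqrt{\ell}$ and $\lfloor(\ell-1)t\rfloor$ by $\lfloor\ell t\rfloor$ introduces only negligible errors. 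I do not anticipate any substantive obstacle, since the analytic heart of the argument is already absorbed into Proposition~\ref{prop::constrainedbrownianbridge}; the only mild subtleties are matching the modular condition to the period and checking the mild reachability hypothesis.
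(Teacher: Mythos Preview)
Your proposal is correct and follows essentially the same route as the paper: reduce via Theorem~\ref{general} to a random walk in the quadrant with step distribution uniform on $\{(1,-1),(-1,0),(0,1)\}$, then apply Proposition~\ref{prop::constrainedbrownianbridge} after checking mean zero and computing the covariance. If anything, you are more thorough than the paper in explicitly verifying the period $p=3$, the resulting congruence $\ell\equiv -m-n+1\pmod 3$, and the reachability hypothesis.
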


In particular, Theorem~\ref{thm::triangulationscalinglimit} holds when
the lattice path starts and ends at the origin, so that the left and
right sides of the planar map each have length $1$.  In this case, the
two sides can be glued together and treated as a single edge in the
sphere, and Theorem~\ref{thm::triangulationscalinglimit} can be
understood as a statement about bipolar maps on the sphere with a
distinguished south to north pole edge.

\medskip\medskip
Next we consider more general bipolar-oriented planar maps.  Suppose we allow not just triangles, but other face sizes.
Suppose that for nonnegative weights $a_2,a_3,\ldots$, we weight a bipolar-oriented planar map by $\prod_{k=2}^\infty a_k^{n_k}$ where $n_k$ is the number of faces with $k$ edges, and we use the convention $0^0=1$.  (Taking $a_k = 0$ means that faces with $k$ edges are forbidden.)  For maps with a given number of edges, this product is finite.  Then we pick a bipolar-oriented planar map with $\ell$ edges with probability proportional to its weight; the normalizing constant is finite, so this defines a probability measure if at least one bipolar map has positive weight.

To ensure that such bipolar maps exist, there is a congruence-type condition involving the number of edges $\ell$ and the set of face sizes $k$ with positive weight $a_k$.  We also use an analytic condition on the set of weights $a_k$ to ensure that random bipolar maps are not concentrated on maps dominated by small numbers of large faces.  When both these conditions are met, we obtain the limiting behavior as $\ell\to\infty$.

\enlargethispage{12pt}
\begin{theorem} \label{thm:weighted-maps}
Suppose that nonnegative face weights $a_2,a_3,\ldots$ are given,
and $a_k>0$ for at least one $k\geq 3$.
Let
\begin{equation} \label{eq:period}
 b = \gcd\big(\{k\geq 1: a_{2k}>0 \} \cup \{2k+1\geq 3 : a_{2k+1}>0\}\big)\,.
\end{equation}
Consider a bipolar-oriented planar map with
fixed boundary lengths $m+1$ and $n+1$
and with the total number of edges given by $\ell$,
chosen with probability proportional to the product of the face weights.
If $m+n$ is odd and all face sizes are even, or if
\begin{equation} \label{eq:cong-mod-b}
2\times(\ell-1)\equiv m+n\bmod b\,,
\end{equation}
does not hold,
then there are no such maps; otherwise, for $\ell$ large enough there are such maps.
Let $S_0,S_1,\ldots,S_{\ell-1}$ be the corresponding lattice walk.

Suppose $\sum_k a_k z^k$ has a positive radius of convergence $R$, and
\begin{equation} \label{eq:crosses}
 1 \leq \sum_{k=2}^\infty \frac{(k-1)(k-2)}{2} a_k R^{k}\,.
\end{equation}
Then for some finite $\lambda$ with $0<\lambda\leq R$
\begin{equation} \label{eq:drift0}
1 = \sum_{k=2}^\infty \frac{(k-1)(k-2)}{2} a_k \lambda^{k}\,.
\end{equation}
Suppose further $\lambda<R$, or $\lambda=R$ but also $\sum_k k^4 a_k R^k < \infty$.
Then as $\ell \to \infty$ while satisfying \eqref{eq:cong-mod-b},
the scaled walk $S_{\lfloor \ell t \rfloor} / \sqrt{\ell}$ converges in law (weakly w.r.t.\ the $L^\infty$ norm on $[0,1]$), to the Brownian excursion in the nonnegative quadrant starting and ending at the origin, with covariance matrix
that is a scalar multiple of $\begin{pmatrix}\sfrac{2}{3}&-\sfrac{1}{3}\\-\sfrac{1}{3}&\sfrac{2}{3}\end{pmatrix}$.

Furthermore, the walk is locally approximately i.i.d.:
For any $\eps_1>0$ there is an $\eps_2>0$ so that as $\ell\to\infty$, for any sequence of $\eps_2\ell$ consecutive moves that is disjoint from the first or last $\eps_1\ell$ moves, the $\eps_2\ell$ moves are within total variation distance $\eps_1$ from an i.i.d.\ sequence, in which move $m_{i,j}$ occurs with probability $a_{i+j+2} \lambda^{i+j}/C$ and move $m_e$ occurs with probability $\lambda^{-2}/C$, and $C$ is a normalizing constant.
\end{theorem}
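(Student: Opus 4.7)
The plan is to translate the problem to random walks via Theorem~\ref{general} and apply Proposition~\ref{prop::constrainedbrownianbridge} to a tilted walk whose conditioning recovers the weighted measure on bipolar-oriented maps.

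First, I would define on the step space $\{m_e\}\cup\{m_{i,j}:i,j\geq 0\}$ the probability measure $\mu_\lambda$ by $\mu_\lambda(m_e)=\lambda^{-2}/C$ and $\mu_\lambda(m_{i,j})=a_{i+j+2}\lambda^{i+j}/C$, where $C=\lambda^{-2}+\sum_{k\geq 2}(k-1)a_k\lambda^{k-2}$. For a path from $(0,m)$ to $(n,0)$ with $\ell-1$ steps, the identities $\sum_r i_r=N_e-n$ and $\sum_r j_r=N_e-m$ (summing over face-steps) together with the Euler-type relation $\sum_r k_r=2(\ell-1)-(m+n)$ give that the $\mu_\lambda^{\otimes(\ell-1)}$-weight of any such path equals $C^{-(\ell-1)}\lambda^{-m-n}$ times its untilted weight $\prod_r a_{k_r}$. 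Since this ratio is path-independent, conditioning $\mu_\lambda^{\otimes(\ell-1)}$ on the endpoints, length, and on staying in $\Z_{\geq 0}^2$ recovers the weighted measure on bipolar-oriented maps pushed forward through Theorem~\ref{general}.

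Next, I would calibrate $\lambda$ to make $\mu_\lambda$ drift-free and verify the remaining hypotheses of Proposition~\ref{prop::constrainedbrownianbridge}. A short calculation shows that the $X$-mean of a $\mu_\lambda$-step equals $(C\lambda^2)^{-1}(1-f(\lambda))$, where $f(\lambda):=\sum_{k\geq 2}\tfrac{(k-1)(k-2)}{2}a_k\lambda^k$, so \eqref{eq:drift0} is exactly the zero-drift condition; the $Y$-mean vanishes simultaneously by symmetry. Existence and uniqueness of $\lambda\in(0,R]$ follow from continuity and strict monotonicity of $f$ on $(0,R]$, together with $f(0)=0$ and $f(R)\geq 1$ from \eqref{eq:crosses}. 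Evaluating $E_{\mu_\lambda}[X^2]$ and $E_{\mu_\lambda}[XY]$ via the sums $\sum_{i+j=k-2}i^2=\tfrac{(k-2)(k-1)(2k-3)}{6}$ and $\sum_{i+j=k-2}ij=\tfrac{(k-3)(k-2)(k-1)}{6}$, and simplifying using \eqref{eq:drift0} to eliminate the leading term, one finds that the covariance matrix is a positive scalar multiple of the triangulation matrix in Theorem~\ref{thm::triangulationscalinglimit}. The moment requirement $E_{\mu_\lambda}[|\text{step}|^3]<\infty$ (for the cone angle $\pi/3$, where the constant from Proposition~\ref{prop::constrainedbrownianbridge} is $\alpha=3$) is automatic when $\lambda<R$, and is supplied by $\sum k^4 a_k R^k<\infty$ in the boundary case $\lambda=R$. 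The congruence \eqref{eq:cong-mod-b} is immediate from $\sum_r k_r=2(\ell-1)-(m+n)$, since the left side is a sum of allowed face sizes $k$ and $b$ in \eqref{eq:period} is chosen so that $b$ divides every such sum; the Frobenius--Sylvester theorem supplies the converse for $\ell$ large. To invoke Proposition~\ref{prop::constrainedbrownianbridge} one also needs, for each $R_0>0$, a path from $(0,m)$ to $(n,0)$ passing through a point at distance $\geq R_0$ from the boundary, which is easily built from many $m_{0,k-2}$ steps followed by many $m_{k-2,0}$ steps interspersed with $m_e$'s. Applying Proposition~\ref{prop::constrainedbrownianbridge} then yields the Brownian-excursion scaling limit with the claimed covariance.

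The remaining local approximately-i.i.d.\ assertion is the main obstacle. For a block of $\eps_2\ell$ consecutive steps disjoint from the first and last $\eps_1\ell$ steps, conditional on the walk's positions at its two endpoints, the block is a $\mu_\lambda$-bridge constrained to $\Z_{\geq 0}^2$. The Brownian-excursion limit just proved implies that with probability $1-o(1)$ the walk stays macroscopically interior throughout the block, so the quadrant-constraint acts trivially. A two-dimensional local central limit theorem for $\mu_\lambda$ (of Gnedenko--Stone type, valid under the moment hypothesis already verified) then shows that conditioning on the block's endpoint values changes the i.i.d.\ $\mu_\lambda$-law by total variation tending to $0$ as $\eps_2\to 0$, proving the local i.i.d.\ claim. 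The delicate point is the boundary case $\lambda=R$, where only polynomial moments of $\mu_\lambda$ are available and a polynomial-moment version of the local CLT must be used.
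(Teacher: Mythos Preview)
Your proposal is correct and follows essentially the same route as the paper: tilt the step weights by $\lambda$ to obtain a zero-drift probability measure, observe that the tilt is constant on paths with fixed endpoints so conditioning recovers the weighted law, verify the moment hypotheses, and apply Proposition~\ref{prop::constrainedbrownianbridge}. The only notable differences are cosmetic. You compute the covariance via $E[X^2]$ and $E[XY]$ using the sums $\sum_{i+j=k-2} i^2$ and $\sum_{i+j=k-2} ij$, whereas the paper works with $\Var[X\pm Y]$ directly; both routes use the zero-drift identity to collapse the answer to $\Var[X-Y]=3\Var[X+Y]$. Your derivation of the congruence via the Euler-type identity $\sum_r k_r = 2(\ell-1)-(m+n)$ is a bit slicker than the paper's, which instead pins down the period of the walk in $\Z^2$ by an explicit argument on $Y_t-X_t$; the paper's version has the advantage of identifying the period exactly (needed to match the hypothesis of Proposition~\ref{prop::constrainedbrownianbridge}), so you should still note that step. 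For the local approximately-i.i.d.\ claim, the paper simply invokes ``an entropy maximization argument,'' so your local-CLT sketch is in fact more detailed than what appears there.
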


\begin{remark}
  The constraint \eqref{eq:crosses} is to ensure that \eqref{eq:drift0} can be satisfied,
  which will imply that the lattice walk has a limiting step distribution that has zero drift.
  The next constraint implies that the limiting step distribution has finite third moment.
  When the weights $a_2,a_3,\ldots$ do not satisfy these constraints, the random walk excursion does not
  in general converge to a Brownian motion excursion.  Can one
  characterize bipolar-oriented planar maps in these cases?
 Can the inequality $\sum_k k^4 a_k \lambda^k < \infty$ be
   replaced with $\sum_k k^3 a_k \lambda^k < \infty$ (finite second moment for the step distribution)?
\end{remark}

\begin{remark}
  Theorem~\ref{thm:weighted-maps} applies to triangulations
  (giving Theorem~\ref{thm::triangulationscalinglimit} except for the
  scalar multiple in the covariance matrix),
  quadrangulations, or $k$-angulations for any fixed $k$,
  or more generally when one allows only a finite set of face sizes.
  The bound~\eqref{eq:crosses} is trivially satisfied in these cases
  since the radius of convergence is $R=\infty$.
\end{remark}

\begin{remark}
In the case where $1=a_2=a_3=\cdots$, i.e., the uniform distribution
on bipolar-oriented planar maps, the radius of convergence is $R=1$,
and $\lambda=1/2$, so Theorem~\ref{thm:weighted-maps} applies.
The step distribution $\nu$ of the walk is
\[\nu \{(-i,j) \} = \begin{cases}
2^{-i-j-3} & i,j \geq 0\,\,\,\, \mathrm{ or}\,\,\,\, i=j= -1 \\ 0 & \mathrm{otherwise.}
\end{cases}\]
In this case it is also possible to derive the distribution $\nu$ for uniformly random bipolar-oriented planar maps using a different bijection, one to noncrossing triples of lattice paths \cite{FPS}.
\end{remark}

\begin{remark}
Under the hypotheses of Theorem~\ref{thm:weighted-maps}, with $p_k$ defined as in its proof,
in a large random map a randomly chosen face has degree $k$ with limiting probability
\[\mathbb{P}(\text{face has degree $k$}) \to \frac{(k-1)p_k}{1-p_0}.\]
\end{remark}

\begin{proof}[Proof of Theorem~\ref{thm:weighted-maps}]
Since the right-hand side of \eqref{eq:drift0} increases monotonically from $0$ and is continuous on $[0,R)$, \eqref{eq:crosses} implies the existence of a solution $\lambda\in(0,R]$ to \eqref{eq:drift0}.
Since $a_k>0$ for some $k\geq 3$, $\lambda<\infty$.

Next let $a_0=1$ and define
\[ C = \frac{a_0}{\lambda^2} + \sum_{k=2}^\infty (k-1) a_k \lambda^{k-2}\,,
\]
which by our hypotheses is finite, and define
\[
p_k = \frac{a_k \lambda^{k-2}}{C}\,.
\]
Then the $p_k$'s define a random walk $(X_t,Y_t)$ in $\Z^2$, which assigns probabilities $p_0$ and  $p_{i+j}$ to steps $m_e$ and $m_{i,j}$ respectively (recall that there are $k-1$ possible steps of type $m_{i,j}$ where $i+j=k-2$, corresponding to a $k$-gon).

If we pick a random walk of length $\ell-1$ from $z_{\text{start}}$ to $z_{\text{end}}$
weighted by the $a_k$'s, it has precisely the same distribution as it
would have if we weighted it by the $p_k$'s instead,
because the total exponent of $\lambda$ for a walk from $z_{\text{start}}$ to $z_{\text{end}}$ is
$y_{\text{end}}-x_{\text{end}}-y_{\text{start}}+x_{\text{start}}$,
and because the total exponent of $C$ is $\ell-1$.
The advantage
of working with the $p_k$'s rather than the $a_k$'s is that they define a random walk,
which, as we verify next, has zero drift.

The drift of $X_t+Y_t$ is zero by symmetry.
The drift of $X_t-Y_t$ is
\be\label{zerodrift}
2 p_0 - \sum_{k\geq 2} (k-2)(k-1) p_k\,
\ee
which is zero by the definition of $p_k$ and \eqref{eq:drift0}.

Next we determine the period of the walk in $\Z^2$.
Consider the antidiagonal direction $Y_t-X_t$.  A move of type $m_e$ decreases this by $2$, and a move of type $m_{i,j}$, corresponding to a $k$-gon with $k=i+j+2$, increases it by $k-2$.  For even $k$, a move of type $m_{k/2-1,k/2-1}$ followed by $k/2-1$ moves of type $m_e$ returns the walk to its start after $k/2$ total moves.  For odd $k$, a move of type $m_{k-2,0}$ and a move of type $m_{0,k-2}$ followed by $k-2$ moves of type $m_e$ returns the walk to its start after $k$ total moves.  So we see that the period of the walk is no larger than $b$ as defined in \eqref{eq:period}.  If the period were smaller, then we could consider a minimal nonempty set of $t$ moves for which $Y_t-X_t=Y_0-X_0$ and $b\nmid t$.  Such a minimal set would contain no $k$-gon moves for even $k$ (since we could remove a $k$-gon move and $k/2-1$ type $m_e$ moves to get a smaller set), and at most one $k$-gon move for any given odd $k$
(since for odd $k$ we can remove $k$-gon moves in pairs along with $k-2$ type $m_e$ moves to get a smaller set).  Let $k_1,\dots,k_r$ be these odd $k$'s.  There are $(k_1+\cdots+k_r-2r)/2$ $m_e$ moves, for a total of $(k_1+\cdots+k_r)/2$ moves.
Then $2t=k_1+\cdots+k_r$, and since $b\mid k_1,\dots,b\mid k_r$,
we have $b \mid 2t$.  Since $r\geq1$, $b\mid k_1$, so $b$ is odd, and so in fact $b\mid t$.  Hence both the walk $(X_t,Y_t)$ and its projection $Y_t-X_t$ are periodic with period $b$.

For a face of size $k$, let $q(k)=k/2$ if $k$ is even and $q(k)=k$ if $k$ is odd.
The period $b$ is an integer linear combination of finitely many terms $q(k_1)<\cdots<q(k_s)$ where $a_{k_i}>0$.
We claim that
any multiple of $b$ which is at least $(s-1) q(k_s)^2$ is a nonnegative-integer linear combination of $q(k_1),\ldots,q(k_s)$.
To see this, let $c$ be a multiple of $b$ that is at least $(s-1) q(k_s)^2$.  We may write $c=\sum_{i=1}^s \beta_i q(k_i)$ where $\beta_i\in\Z$;
suppose that we choose the coefficients $\beta_1,\ldots,\beta_s$ to maximize the sum of the negative coefficients.  If some coefficient $\beta_i$
is negative, then there is another coefficient $\beta_j$ for which $\beta_j q(k_j) \geq q(k_s)^2 > q(k_i) q(k_j)$, in which case we could decrease
$\beta_j$ by $q(k_i)$ and increase $\beta_i$ by $q(k_j)$ to increase the sum of the negative coefficients.  This completes the proof of the claim.
Thus the walk in $\Z^2$ (not confined to the quadrant) may return to its start after any sufficiently large multiple of $b$ steps.

Suppose a walk in $\Z^2$ starts at $(0,m)$ and goes to $(n,0)$ after $t=\ell-1$ steps.
Consider the walk's projection in the antidiagonal direction: $(Y_t-X_t)-(Y_0-X_0)=-m-n$.  If $m+n$ is even, then the projected walk can reach its destination
after $(m+n)/2$ $m_e$ moves, and since $b$ is the period, it follows that $\ell-1\equiv (m+n)/2\bmod b$.  If $m+n$ is odd, then for the walk to exist there must be some odd $k$ with $a_k>0$.  The projected walk can reach its destination after an $m_{k-2,0}$ move and $(m+n+k-2)/2$ $m_e$ moves, and since $b$ is the period of the walk, $\ell-1\equiv (m+n+k)/2 \bmod b$.  In either case, the existence of such a walk implies $2(\ell-1) \equiv m+n \bmod b$.

If there are only even face sizes and $m+n$ is odd, there are no walks from $(0,m)$ to $(n,0)$.
Otherwise,
whether $m+n$ is even or there is an odd face size,
we can first choose face moves to change the $X_t+Y_t$ coordinate from $m$ to $n$, and then follow them by some number of $m_e$ moves to change the $Y_t-X_t$ coordinate to $-n$.
We may then follow these moves by a path from $(n,0)$ to itself with length given by any sufficiently large multiple of $b$.
Thus, for any
sufficiently large $\ell$ with $2\,(\ell-1) \equiv m+n \bmod b$, there is a walk within $\Z^2$ (not confined to the quadrant) from $(0,m)$ to $(n,0)$.

Next pick a face size $k\geq 3$ for which $a_k>0$.  For $s\geq 0$, the above walk in $\Z^2$ from $(0,m)$ to $(n,0)$ can be prepended with $(m_{0,k-2}^2 m_e^{k-2})^s$ and postpended with $(m_e^{k-2} m_{k-2,0}^2)^s$, and it will still go from $(0,m)$ to $(n,0)$.  For some sufficiently large~$s$, the walk will not only remain in the quadrant but will also travel arbitrarily far from the boundary of the quadrant, which gives the paths required by Proposition~\ref{prop::constrainedbrownianbridge}.

The variances of $X-Y$ and $X+Y$ are respectively
\begin{equation} \label{var[x-y]}
\Var[X-Y] = 4p_0 +\sum_{k\geq 2}(k-2)^2(k-1) p_k,
\end{equation}
and
\begin{equation}\label{var[x+y]}
\Var[X+Y] = \sum_{k\geq 2} p_k ((k-2)^2+(k-4)^2+\cdots+(-k+2)^2)= \sum_{k\geq 2} p_k \times 2\binom{k}{3},
\end{equation}
which are both positive and finite by our hypotheses.
Using the zero-drift condition~\eqref{zerodrift}, we may combine \eqref{var[x-y]} and \eqref{var[x+y]} to obtain
\[
\Var[X-Y] = \sum_{k\geq 2}(k-2)(k-1)k \,p_k = \sum_{k\geq 2} p_k \,6 \binom{k}{3} = 3 \Var[X+Y].
\]

Then we apply Proposition~\ref{prop::constrainedbrownianbridge}.
Since the ratio of variances is $3$, we need the walk's step distribution to have a finite third moment (see the comments after Proposition~\ref{prop::constrainedbrownianbridge}).
Since there are $|k-1|$ steps of type~$p_k$, the third moment of the step distribution is finite when
\[ \sum_{k\geq 2} p_k k^4 = \frac{1}{C}\sum_{k\geq 2} a_k \lambda^{k-2} k^4 < \infty
\]
which is implied by our hypotheses.
Hence by Proposition~\ref{prop::constrainedbrownianbridge} the scaling limit of the walk is a correlated Brownian excursion in the quadrant.

The local approximate i.i.d.\ nature of the walk follows from an entropy maximization argument
together with the facts that we showed above.
\end{proof}

\begin{remark} \label{rem::otherkappavariances} If one relaxes the
  requirement that the probabilities assigned by the step distribution
  $\nu$ be the same for
  all increments corresponding to a given face size, one can find a $\nu$
  such that the
  expectation is still $(0,0)$ and when $(X,Y)$ is sampled from~$\nu$,
  the law is still symmetric w.r.t.\ reflection about the line $y=-x$
  but the variance ratio $\Var[X-Y] / \Var[X+Y]$ assumes any value
  strictly between $1$ and~$\infty$.  Indeed, one approaches one
  extreme by letting $(X,Y)$ be (close to being) supported on the
  $y=-x$ antidiagonal, and the other extreme by letting $(X,Y)$ be
  (close to being) supported on the $x$- and $y$-axes far from the
  origin (together with the point $(1,-1)$).  The former corresponds
  to a preference for nearly balanced faces (in terms of the number of
  clockwise and counterclockwise oriented edges) while the latter
  corresponds to a preference for unbalanced faces.
\end{remark}

\begin{remark}\label{rem::infinitevolumelimit}
  In each of the models treated above, it is natural to consider an
  ``infinite-volume limit'' in which lattice path increments indexed
  by $\Z$ are chosen i.i.d.\ from~$\nu$. The standard central limit
  theorem then implies that the walks have scaling limits given by a
  Brownian motion with the appropriate covariance matrix.
\end{remark}

\section{Bipolar-oriented triangulations} \label{sec::triangle}

\subsection{Enumeration}

The following corollary is an easy consequence of the bijection.
The formula itself goes back to Tutte \cite{tutte:triangulations};
Bousquet-Melou gave another proof together with a discussion of the
bipolar orientation interpretation \cite[Prop.~5.3, eqn.~(5.11) with $j=2$]{bousquet-melou:maps}.

\begin{corollary} \label{cor:enumerate}
The number of bipolar-oriented triangulations of the sphere with $\ell$ edges in which S and N are adjacent and marked
is (with $\ell=3n$)
\[B_{\ell} =\frac{2\,(3n)!}{(n+2)!\,(n+1)!\,n!}\]
(and zero if $\ell$ is not a multiple of $3$).
\end{corollary}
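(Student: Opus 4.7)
The plan is to reduce the enumeration to a lattice-path count via Theorem~\ref{general}, and then evaluate that count through a bijection with rectangular standard Young tableaux.

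First, I would convert a bipolar-oriented sphere triangulation with marked adjacent poles $S$ and $N$ into a bipolar-oriented disk map by cutting along the $S$--$N$ edge. The cut separates the two sides of the edge, producing a disk whose boundary consists of the two copies of the $S$--$N$ edge (both oriented from $S$ to $N$) and whose other vertices and edges are unchanged; the resulting disk has $3n+1$ edges with west and east boundaries each of length~$1$. By Theorem~\ref{general}, specialized to the triangulation face-step set $\{(-1,0),(0,1)\}$, such disk maps are in bijection with lattice walks of length $3n$ in $\Z_{\geq 0}^2$ from $(0,0)$ to $(0,0)$ with step set $\{(1,-1),(-1,0),(0,1)\}$. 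The two equations for returning to the origin ($a-b=0$ and $-a+c=0$) together with $a+b+c=3n$ force each of the three step types to occur exactly $n$ times.

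Next I would recognise these walks as ballot sequences for standard Young tableaux of rectangular shape $(n,n,n)$. Given such a tableau, reading its entries $1,2,\ldots,3n$ in order and recording the row index of each produces a word in the alphabet $\{1,2,3\}$ with $n$ letters of each kind, having the prefix property that at every position the number of $1$'s dominates the number of $2$'s, which in turn dominates the number of $3$'s. Under the identification $1\mapsto(0,1)$, $2\mapsto(1,-1)$, $3\mapsto(-1,0)$, this prefix condition is precisely the condition that the walk remain in $\Z_{\geq 0}^2$, giving the bijection.

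Finally, the Frame--Robinson--Thrall hook length formula for the $3\times n$ Young diagram finishes the count: the cell $(i,j)$ has hook length $n+4-i-j$, a direct product yields $\prod_{i,j} h(i,j) = \tfrac{1}{2}(n+2)!\,(n+1)!\,n!$, and therefore
\[
  \#\,\mathrm{SYT}(n,n,n)\;=\;\frac{(3n)!}{\prod_{i,j} h(i,j)}\;=\;\frac{2\,(3n)!}{(n+2)!\,(n+1)!\,n!},
\]
which is the claimed value of $B_\ell$. The congruence $\ell\equiv 0\pmod{3}$ is automatic, both from Euler's identity $E=3V-6$ for sphere triangulations and from the walk's equal-multiplicity requirement.

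The one genuine technical point is the bookkeeping in the first paragraph: verifying that cutting the marked $S$--$N$ edge gives a one-to-one correspondence between marked bipolar sphere triangulations and the disk maps with unit west and east boundaries enumerated by Theorem~\ref{general}. An equivalent alternative, which avoids doubling an edge, is to designate one of the two triangles adjacent to the $S$--$N$ edge as an outer face, producing a disk with $3n$ edges and boundary lengths $(2,1)$; Theorem~\ref{general} then produces walks of length $3n-1$ from $(0,1)$ to $(0,0)$, whose count agrees with the above by a short re-indexing bijection.
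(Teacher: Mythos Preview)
Your proof is correct and follows essentially the same route as the paper's. Both reduce the count, via Theorem~\ref{general}, to lattice walks of length $3n$ from $(0,0)$ to $(0,0)$ in the nonnegative quadrant with steps $\{(1,-1),(-1,0),(0,1)\}$, and then identify this count with the $3$-dimensional Catalan number.

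The execution differs in two inessential ways. First, your sphere-to-disk conversion cuts along the $S$--$N$ edge to get boundary lengths $(1,1)$ and $3n+1$ edges, whereas the paper designates one adjacent triangle as the outer face to get boundary lengths $(1,2)$ and $3n$ edges, then appends a final $m_{1,0}$ step; you already note this alternative in your last paragraph, and the two yield the same walk set. Second, for the enumeration the paper applies the shear $\begin{pmatrix}1&1\\0&1\end{pmatrix}$, rewrites the walks as $3$D monotone paths in the chamber $y\ge x\ge z$, and cites the OEIS entry for $3$D Catalan numbers; you instead give the explicit bijection to standard Young tableaux of shape $(n,n,n)$ and evaluate via the hook length formula. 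Your version is more self-contained, while the paper's is terser; the underlying combinatorics is identical.
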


\begin{proof}
  In a triangulation $2E=3F$ so the number of edges is a multiple of
  $3$.  Since S and N are adjacent, there is a unique embedding in the
  disk
  so that the west boundary has
  length~$1$
  and the east boundary has length~$2$.
  The lattice walks as discussed there go from $(0,0)$ to $(1,0)$.  It
  is convenient to concatenate the walk with a final $m_{1,0}$ step,
  so that the walks are from $(0,0)$ to $(0,0)$ of length $\ell$ and
  remain in the first quadrant.  Applying a shear
  $\begin{pmatrix}1&1\\0&1\end{pmatrix}$, the walks with steps
  $m_e,m_{0,1},m_{1,0}$ become walks with steps $(1,0),(0,1),(-1,-1)$
  which remain in the domain $y\ge x\ge0$.  Equivalently this is the
  number of walks from $(0,0,0)$ to $(n,n,n)$ with steps
  $(1,0,0),(0,1,0),(0,0,1)$ remaining in the domain $y\ge x\geq z$.
  These are the so-called 3D Catalan numbers, see
  \href{https://oeis.org/A005789}{A005789} in the OEIS.
\end{proof}

\subsection{Vertex degree}

Using the bijection between paths and bipolar-oriented maps, we can
easily get the distribution of vertex degrees of a large
bipolar-oriented triangulation.

\begin{prop}
  In a large bipolar-oriented planar triangulation with fixed boundary lengths $m+1$ and $n+1$, as the number of edges $\ell$ tends to $\infty$
 with $\ell+m+n\equiv 1\bmod 3$, the limiting
  in-degree and out-degree distributions of a random vertex are
  independent and geometrically distributed (starting at $1$)
  with mean $3$.
\end{prop}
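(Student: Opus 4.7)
The plan is to combine the bijection of Theorem~\ref{general} with the local i.i.d.\ structure stated in the last paragraph of Theorem~\ref{thm:weighted-maps}: in the scaling limit $\ell\to\infty$, the increments of the lattice walk associated with the triangulation are, in the bulk, approximately i.i.d.\ uniform on $\{m_e,m_{0,1},m_{1,0}\}$, each with probability $1/3$. I would set $L(t)$ and $U(t)$ to be the lower and upper endpoints of $e_t$, so that $\operatorname{indeg}(v)=|\{t:U(t)=v\}|$ and $\operatorname{outdeg}(v)=|\{t:L(t)=v\}|$.

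Next I would work out the transition rules for the $U$ and $L$ processes by a local case analysis of the three step types (using $U(t)=L(t+1)$ for $m_e$ and that each $m_{i,j}$ step sits inside a face whose maximum is $U(t)$ and minimum is $L(t+1)$): $m_e$ moves $U$ to its NW-tree parent and moves $L$ down to the SE-tree child $U(t)$; $m_{0,1}$ moves $U$ to a freshly created NW-tree child while $L$ stays put; $m_{1,0}$ keeps $U$ fixed and moves $L$ to its SE-tree parent. This shows that $U$ performs a depth-first traversal of the NW tree (rooted at $U(0)$) and $L$ a DFS of the SE tree (rooted at $S$), with each visit to a vertex followed by an independent uniform choice among \emph{stay} (prob.\ $1/3$), \emph{descend to a child} (prob.\ $1/3$), or \emph{ascend to the parent} (prob.\ $1/3$).

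For a generic non-boundary vertex $v$ I would then observe that the number of $U$-visits to $v$ equals $1$ (the initial visit) plus the number of stay/descend-and-return events before the single ascend event that ends $v$'s tenure in the $U$-DFS stack. Each event is an independent uniform walk step with ``ascend'' probability $1/3$, so the number of continues is $\operatorname{Geom}(1/3)$, and
\[
P\bigl(\operatorname{indeg}(v)=a\bigr)=(2/3)^{a-1}(1/3),\qquad a\geq 1,
\]
the geometric distribution with mean $3$. The descend-and-return events complete almost surely since each child's $U$-subtree is a critical Galton--Watson tree (mean-one offspring) and hence finite. By the NW/SE symmetry, the out-degree has the same distribution.

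Finally, to conclude independence I would note that self-loops are excluded, so $U(t)\neq L(t)$; the sets $\{t:U(t)=v\}$ and $\{t:L(t)=v\}$ are disjoint, and the walk steps driving the two geometric variables form disjoint subsets of an i.i.d.\ sequence. The hardest part will be justifying these DFS/geometric computations rigorously given that the ``at-$v$'' time sets are themselves random; the natural fix is an inductive application of the strong Markov property at successive $U$- and $L$-visits to $v$, which reduces the calculation to a deterministic geometric stopping time. Boundary contributions from the $O(1)$ vertices $S$, $N$, and $v_0$ are negligible as $\ell\to\infty$.
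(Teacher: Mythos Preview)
Your approach is essentially the paper's, viewed from a dual angle: the paper fixes a vertex $v$ and tracks its \emph{integer position on the frontier} relative to the active vertex, whereas you track the active vertex $U(t)$ and the vertex $L(t)$ just below it. These are equivalent, since $U(t)=v$ iff $v$ sits at position~$0$ and $L(t)=v$ iff $v$ sits at position~$1$. The paper writes down the full transition diagram (positions $\ldots,-2,-1,0,1,2,\ldots$ with the three moves), notes that in-degree $=$ \#visits to $0$ and out-degree $=$ \#visits to $1$, and reads off that each is geometric with success probability $1/3$. Your DFS picture of $U$ on the NW tree and $L$ on the SE tree is a pleasant geometric reinterpretation of exactly the same Markov chain.

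The one genuine gap is your independence argument. Saying that $\{t:U(t)=v\}$ and $\{t:L(t)=v\}$ are disjoint index sets of an i.i.d.\ sequence does \emph{not} by itself yield independence of their cardinalities, because those index sets are random and could be correlated. What makes independence work here is a structural fact you have not stated: all $U$-visits to $v$ occur \emph{before} all $L$-visits to $v$. In the paper's language, once the position of $v$ reaches $1$ it can never return to $0$ (from position $1$ the only transitions are loop, go to $2$, or leave the frontier). Equivalently in your language: once $U$ ascends from $v$ via $m_e$, your own transition rules show $U$ never revisits $v$, and that very $m_e$ step is the first time $L$ equals $v$. With this ordering in hand, the first $L$-visit to $v$ is a stopping time at which the in-degree is already measurable, and then the strong Markov property (which you do invoke) finishes the job. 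You should make this ``one-way door'' from position $0$ to position $1$ explicit; the rest of your argument is sound.
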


\begin{proof}
We examine the construction of bipolar-oriented planar maps
when the steps give triangles.  Any new vertex or new edge
is adjoined to the marked bipolar map on its eastern boundary,
which we also call the \textit{frontier}.

A new vertex is created by an $m_{0,1}$ move, or an $m_e$ move if there are currently
no frontier vertices above the active vertex, and when a vertex is created it is the active vertex.
Each subsequent move moves frontier vertices relative to the active vertex, so let us record
their position with respect to the active vertex by integers, with positive integers recording the position
below the active vertex and negative integers recording the position above it.
See Figure \ref{3ms}.
\begin{figure}[htbp]
\begin{center}
\includegraphics[width=5in]{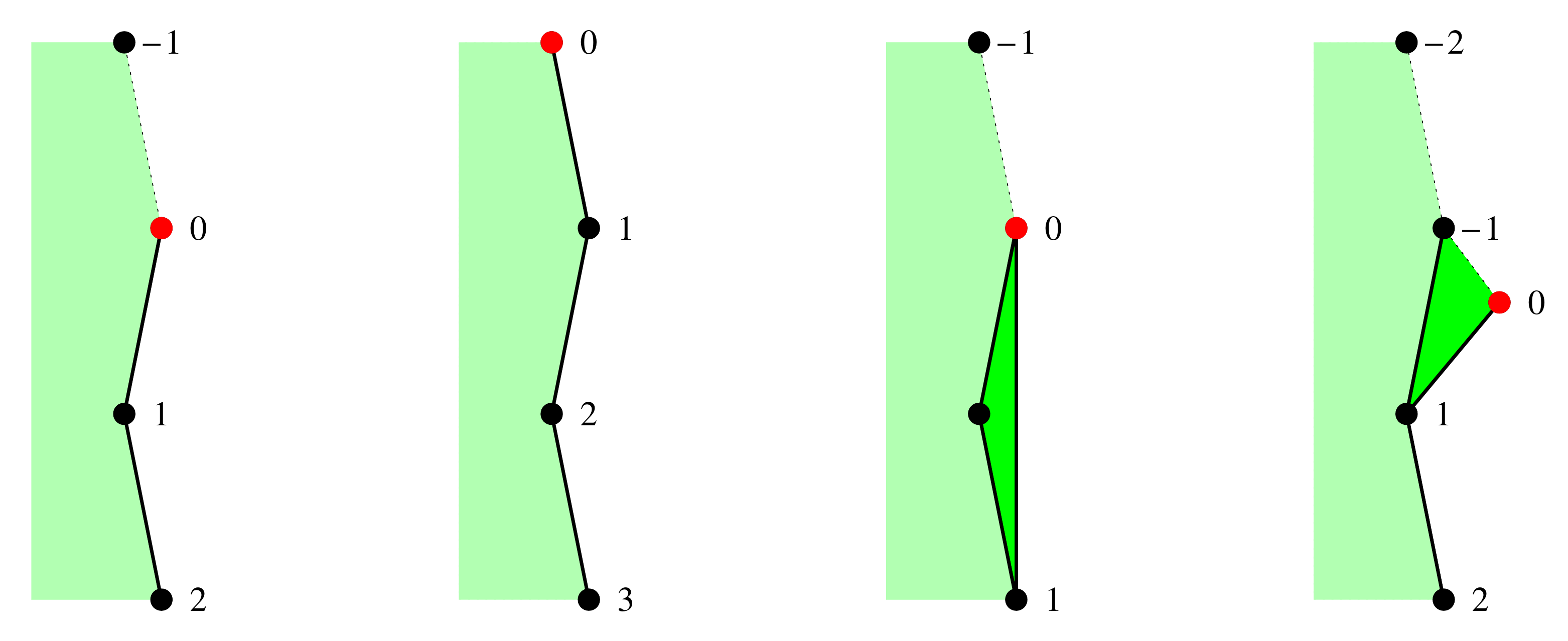}
\newcommand\capsp{\hspace{1.39in}}
\mbox{\small \clap{frontier before move}\capsp\clap{after $m_e$}\capsp\clap{after $m_{1,0}$}\capsp\clap{after $m_{0,1}$}\ \ \ }
\end{center}
\caption{\label{3ms}  Action of the three moves $m_e,m_{1,0},m_{0,1}$ on the frontier.  The vertex positions (relative to the active vertex) are shown.}
\end{figure}

The following facts are easily verified.
\begin{enumerate}
\item
A vertex moves off the frontier exactly when it is at position $1$ and an $m_{1,0}$ move takes place.
\item
$m_e$ moves increase the index of vertices by $1$.
\item
$m_{0,1}$ moves decrease the index of a vertex by $1$ if it is non-positive, else leave it fixed.
\item
$m_{1,0}$ moves decrease the index by $1$ if it is $\ge 2$, else leave it fixed (if the index is $1$ it is moved off of the frontier).
\item
Except for the start vertex of the initial structure, whenever a vertex
is created, its in-degree is $1$ and its out-degree is $0$.
\item \label{fact:in-out}
The in-degree of a vertex increases by $1$ each time it visits position $0$,
the out-degree increases each time it visits position $1$.
\end{enumerate}

The transition diagram is summarized here:

\begin{center}
\begin{tikzpicture}[->,shorten >=1pt,auto,node distance=1.8cm,
  thick,main node/.style={circle,fill=yellow,draw,font=\large\bfseries,minimum size=10mm}]

  \node[main node] (0) {$0$};
  \node[main node] (1) [right of=0] {$1$};
  \node[main node] (2) [right of=1] {$2$};
  \node[main node] (3) [right of=2] {$3$};
  \node (4) [right of=3] {$\cdots$};
  \node[main node] (-1) [left of=0] {$-1$};
  \node[main node] (-2) [left of=-1] {$-2$};
  \node[main node] (-3) [left of=-2] {$-3$};
  \node (-4) [left of=-3] {$\cdots$};
  \node (off) at (0,-1.5) {vertex off frontier};
  \node (born) at (1.4,2.3) {vertex created};
  \path[every node/.style={font=\sffamily\small,inner sep=1pt}]

  (-4)
  edge [bend left] node {$m_e$} (-3)
  (-3) edge [loop above] node {$m_{1,0}$} (-3)
  edge [bend left] node {$m_e$} (-2)
  edge [bend left] node {$m_{0,1}$} (-4)
  (-2) edge [loop above] node {$m_{1,0}$} (-2)
  edge [bend left] node {$m_e$} (-1)
  edge [bend left] node {$m_{0,1}$} (-3)
  (-1) edge [loop above] node {$m_{1,0}$} (-1)
  edge [bend left] node {$m_e$} (0)
  edge [bend left] node {$m_{0,1}$} (-2)
  (0) edge [loop above] node {$m_{1,0}$} (0)
  edge [bend left] node {$m_e$} (1)
  edge [bend left] node {$m_{0,1}$} (-1)
  (1) edge [loop above] node {$m_{0,1}$} (1)
  edge [bend left] node {$m_e$} (2)
  edge node {$m_{1,0}$} (off)
  (2) edge [loop above] node {$m_{0,1}$} (2)
  edge [bend left] node {$m_e$} (3)
  edge [bend left] node {$m_{1,0}$} (1)
  (3) edge [loop above] node {$m_{0,1}$} (3)
  edge [bend left] node {$m_e$} (4)
  edge [bend left] node {$m_{1,0}$} (2)
  (4)
  edge [bend left] node {$m_{1,0}$} (3)
  (born) edge (0)
  ;
\end{tikzpicture}
\end{center}

For the purposes of computing the final in-degree and out-degree
of a vertex, we can
simply count the number of visits to $0$ before its index becomes
positive, and then count the number of visits to $1$ before it is
absorbed in the interior of the structure.

Since $m$ and $n$ are held fixed as $\ell\to\infty$, almost all vertices in the bipolar map are created by $m_{0,1}$ moves.
By the local approximate i.i.d.\ property of the walk proved in Theorem~\ref{thm:weighted-maps}, we see that the moves in the transition diagram above converge weakly to a Markov chain where each transition occurs with probability $1/3$.

The Markov chain starts at $0$, and on each visit to $0$ there is a $1/3$ chance of going to $1$ and a $2/3$ chance of eventually returning to $0$.  On each visit to $1$, there is a $1/3$ chance of exiting and a $2/3$ chance of eventually returning to $1$.  In the Markov chain, the number of visits to $0$ and $1$ are a pair of independent geometric random variables with minimum $1$ and mean $3$, which in view of fact~\ref{fact:in-out} above, implies the proposition.
\end{proof}

\section{Scaling limit} \label{sec::peanosphere}

\enlargethispage{12pt}
\subsection{Statement}

The proof of the following theorem is
an easy computation upon application of the infinite-volume tree-mating theory introduced in \cite{DMS:mating}, a derivation of the relationship between the SLE/LQG parameters and a certain variance ratio in \cite{DMS:mating,GHMS:covariance},
and a finite volume elaboration in \cite{miller-sheffield:finite-trees}.
For clarity and motivational purposes we will reverse the standard conventions and
give the proof first, explaining the relevant background in the following subsection.

\begin{theorem}\label{scalinglimit}
The scaling limit of the bipolar-oriented planar map with its interface curve, with fixed boundary lengths $m+1$ and $n+1$, and number of edges $\ell\to\infty$ (with a possible congruence restriction on $\ell$, $m$, and $n$ to ensure such maps exist),
with respect to the peanosphere topology, is a $\sqrt{4/3}$-LQG sphere decorated by an independent
$\SLE_{12}$ curve.
\end{theorem}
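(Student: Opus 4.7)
The plan is to assemble three ingredients: the walk scaling limit from Theorem~\ref{thm:weighted-maps}, the interpretation of the peanosphere topology as weak convergence of the pair of contour functions encoding the NW and SE trees, and the mating-of-trees correspondence between the Brownian correlation and the $(\gamma,\kappa')$ parameters of an SLE-decorated LQG sphere from \cite{DMS:mating,GHMS:covariance,miller-sheffield:finite-trees}.

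First, I would apply Theorem~\ref{thm:weighted-maps} to conclude that the lattice walk $(X_t,Y_t)$ encoding the bipolar orientation along its interface curve, after rescaling space by $1/\sqrt{\ell}$ and time by $1/\ell$, converges in law to a Brownian excursion in $\R_{\geq 0}^2$ with covariance matrix equal to a positive scalar multiple of $\begin{pmatrix}\sfrac{2}{3} & -\sfrac{1}{3} \\ -\sfrac{1}{3} & \sfrac{2}{3}\end{pmatrix}$. The correlation coefficient of the two coordinates is $\rho = (-1/3)/(2/3) = -1/2$; the scalar ambiguity in the covariance matrix is harmless since $\rho$ is scale-invariant. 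By construction, $X_t$ and $Y_t$ record the tree-geodesic distances (to the S and N poles respectively) to the endpoints of the $t$-th edge visited by the interface path, so the limiting pair of continuum random trees encoded by the two coordinates of the limit Brownian excursion is by definition the peanosphere scaling limit of the pair (SE-tree, NW-tree) together with the interface curve that winds between them.

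Second, I would invoke the mating-of-trees theorem. In \cite{DMS:mating} it is shown that the mating of a pair of continuum random trees encoded by a correlated Brownian motion with correlation $\rho\in(-1,0)$ produces a $\gamma$-LQG surface decorated by an independent space-filling $\SLE_{\kappa'}$ curve, and the precise relationship derived in \cite{DMS:mating,GHMS:covariance} is $\rho = -\cos(\pi\gamma^2/4)$ together with $\kappa' = 16/\gamma^2$. Plugging in $\rho = -1/2$ gives $\cos(\pi\gamma^2/4) = 1/2$, hence $\gamma^2/4 = 1/3$, so $\gamma = \sqrt{4/3}$ and $\kappa' = 12$. To pass from the whole-plane/infinite-volume formulation in \cite{DMS:mating} (where the encoding process is a two-sided Brownian motion and the surface is a quantum cone) to the sphere statement of Theorem~\ref{scalinglimit}, I would cite the finite-volume elaboration of \cite{miller-sheffield:finite-trees}, in which the correlated Brownian excursion of unit length is shown to mate to a $\sqrt{4/3}$-LQG sphere decorated by space-filling $\SLE_{12}$ from $\infty$ to $\infty$.

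The main (and essentially only) obstacle is bookkeeping, since all the heavy analytic work is imported from the cited papers; the one nontrivial identification is the value $\rho = -1/2$ that emerges from Theorem~\ref{thm:weighted-maps}. One should also verify that the prescribed boundary lengths $m+1,n+1$ and the congruence restrictions on $\ell$ do not perturb the bulk scaling limit, but this is immediate since the boundary lengths are $O(1)$ while the excursion has spatial scale $\sqrt{\ell}$, and the congruence restriction is already accommodated by the hypotheses of Proposition~\ref{prop::constrainedbrownianbridge}.
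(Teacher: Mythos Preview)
Your proposal is correct and follows essentially the same approach as the paper: invoke the lattice-walk scaling limit from Section~\ref{pathscalinglimit}, then feed the resulting covariance into the mating-of-trees correspondence from \cite{DMS:mating,GHMS:covariance,miller-sheffield:finite-trees} to read off $(\gamma,\kappa')$. The only cosmetic difference is that the paper phrases the identification via the variance ratio $\Var[X-Y]/\Var[X+Y] = (1+\cos(4\pi/\kappa'))/(1-\cos(4\pi/\kappa')) = 3$, whereas you use the equivalent correlation formulation $\rho = -\cos(\pi\gamma^2/4) = -1/2$; since $\gamma^2 = 16/\kappa'$ and $(1-\rho)/(1+\rho)$ equals that variance ratio, the two computations are the same.
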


We remark that the peanosphere topology is neither coarser nor finer than other natural topologies, including in particular those that we discuss in
the Section~\ref{peanosphere}.

\begin{proof}[Proof of Theorem~\ref{scalinglimit}]
In Section~\ref{pathscalinglimit} it was shown that
the interface function $(X_t,Y_t)$ for the bipolar-oriented random planar map
converges as $\ell\to\infty$ to a Brownian excursion in the nonnegative quadrant with increments $(X,Y)$ having
covariance matrix (up to scale)
$\begin{pmatrix}\sfrac{2}{3}&-\sfrac{1}{3}\\-\sfrac{1}{3}&\sfrac{2}{3}\end{pmatrix}$,
that is
$X-Y$ and $X+Y$ are independent, and $\text{Var}[X-Y] = 3\text{Var}[X+Y]$.

The fact that the limit is a Brownian excursion implies, by \cite[Theorem~1.13]{DMS:mating} and the finite volume variant in \cite{miller-sheffield:finite-trees} and \cite[Theorem~1.1]{GHMS:covariance}, that the scaling limit in the peanosphere topology is a peanosphere, that is, a $\gamma$-LQG sphere decorated by an independent space-filling $\SLE_{\kappa'}$.  The values $\gamma, \kappa'$ are determined by the covariance structure of the limiting Brownian excursion. The ratio of variances $\text{Var}[X-Y]/\text{Var}[X+Y]$ takes the form
\begin{equation} \label{eqn::cosratio} (1+\cos[4\pi/\kappa'])/(1-\cos[4\pi/\kappa']).\end{equation}
This relation was established for $\kappa\in[2,4)$ and $\kappa'\in (4,8]$ in \cite{DMS:mating},
and more generally for $\kappa \in (0,4)$ and $\kappa'\in(4,\infty)$ in \cite{GHMS:covariance}.\footnote{There is as yet no analogous construction corresponding to the limiting case $\kappa = \kappa'=4$, where~\eqref{eqn::cosratio} is zero so that $\Var(X-Y) = 0$ and $X=Y$ a.s.\ It is not clear what such a construction would look like, given that space-filling $\SLE_{\kappa'}$ has only been defined for $\kappa' > 4$, not for $\kappa'=4$, and the peanosphere construction in Section~\ref{peanosphere} is trivial when the limiting Brownian excursion is supported on the diagonal $x=y$.}
Setting it equal to $3$ and solving we find $\kappa' = 12$. For this value of $\kappa'$ we have
$\gamma = \sqrt{\kappa} = \sqrt{16/\kappa'} = \sqrt{4/3}$.
\end{proof}

\begin{remark} If the covariance ratios vary as in Remark~\ref{rem::otherkappavariances}, then the $\kappa'$ values varies between $8$ and $\infty$. In other words, one may obtain any $\kappa' \in (8,\infty)$, and corresponding $\gamma = \sqrt{16/\kappa'}$, by introducing weightings that favor faces more or less balanced.
\end{remark}

\begin{remark}
The infinite-volume variant described in Remark~\ref{rem::infinitevolumelimit} corresponds to the mated pair of \textit{infinite-diameter\/} trees first described in \cite{DMS:mating}, which in turn corresponds to the so-called $\gamma$-quantum cone described in the next subsection.
\end{remark}

\subsection{Peanosphere background}
\label{peanosphere}

\begin{figure}[ht!]
\begin{center}
\includegraphics[scale=0.85,page=4]{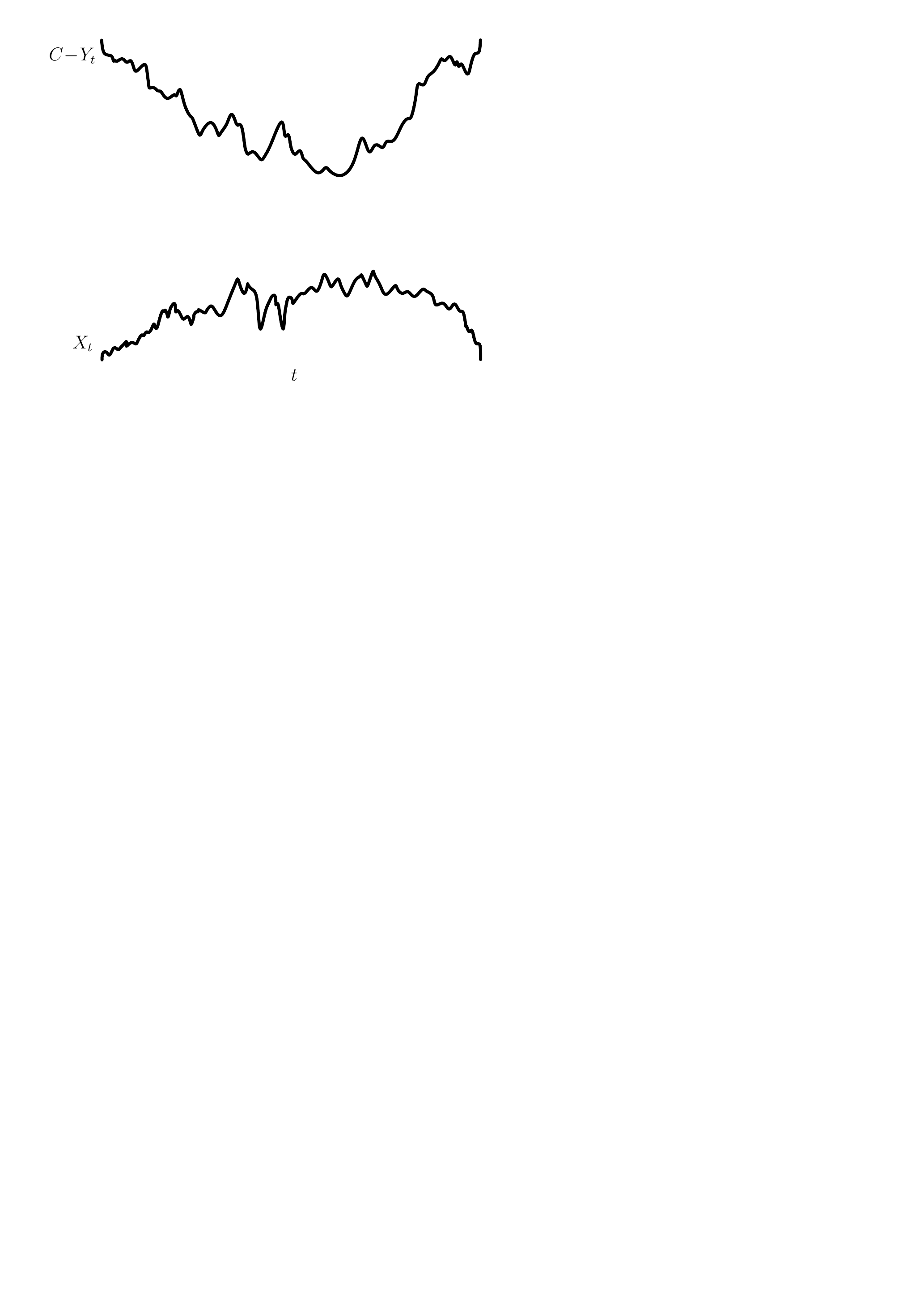}
\end{center}
\caption{\label{fig::lamination}  Gluing together a pair of CRTs to obtain a topological sphere.  Illustration of the peanosphere construction.  (This figure first appeared in \cite{DMS:mating}.)}
\end{figure}

The purpose of this section is to give a brief description of how Liouville quantum gravity (LQG) surfaces \cite{duplantier-sheffield:KPZ} decorated by independent $\SLE$ processes can be viewed as matings of random trees which are related to Aldous' continuum random tree (CRT) \cite{aldous:crt1,aldous:crt2,aldous:crt3}.  The results that underly this perspective are established in \cite{DMS:mating, miller-sheffield:finite-trees}, building on prior results from  \cite{duplantier-sheffield:KPZ,sheffield:welding,sheffield:cle-trees,miller-sheffield:ig1,miller-sheffield:ig2,miller-sheffield:ig3,miller-sheffield:ig4}.

Recall that if $h$ is an instance of the Gaussian free field (GFF)
on a planar domain~$D$ with zero-boundary conditions and $\gamma \in (0,2)$, then the $\gamma$-LQG surface associated with $h$ of parameter $\gamma$ is described by the measure $\mu_h$ on $D$ which formally has density $e^{\gamma h}$ with respect to Lebesgue measure.  As $h$ is a distribution and does not take values at points, this expression requires interpretation.  One can construct this measure rigorously by considering approximations $h_\varepsilon$ to $h$ (by averaging the field on circles of radius $\epsilon$)
 and then take $\mu_h$ to be the weak limit as $\varepsilon \to 0$ of $\varepsilon^{\gamma^2/4} e^{h_\varepsilon(z)} dz$ where $dz$ denotes Lebesgue measure on $D$; see \cite{duplantier-sheffield:KPZ}.  If one has two planar domains $D_1,D_2$, a conformal transformation $\varphi \colon D_1 \to D_2$, an instance of the GFF $h_2$ on $D_2$, and lets
\begin{equation}
\label{eqn::coordinate_change}
h_1 = h_2 \circ \varphi + Q \log| \varphi'| \quad\text{where}\quad Q = \frac{2}{\gamma} + \frac{\gamma}{2}
\end{equation}
then the $\gamma$-LQG measure $\mu_{h_2}$ associated with $h_2$ is a.s.\ the image under $\varphi$ of the $\gamma$-LQG measure $\mu_{h_1}$ associated with $h_1$.  A \textit{quantum surface\/} is an equivalence class of fields $h$ where we say that two fields are equivalent if they are related as in~\eqref{eqn::coordinate_change}.

This construction generalizes to any law on fields $h$ which is absolutely continuous with respect to the GFF.  The results in this article will be related to two such laws \cite{sheffield:welding,DMS:mating}:
\begin{enumerate}
\item The $\gamma$-quantum cone (an infinite-volume surface).
\item The $\gamma$-LQG sphere (a finite-volume surface).
\end{enumerate}
We explain how they can both be constructed with the ordinary GFF $h$ as the starting point.

The $\gamma$-quantum cone can be constructed by the following limiting procedure starting with an instance of the GFF $h$ as above.  Fix a constant $C > 0$ and note that adding $C$ to $h$ has the effect of multiplying areas as measured by $\mu$ by the factor $e^{\gamma C}$.  If one samples $z \in D$ according to $\mu$ and then rescales the domain so that the mass assigned by $\mu_{h+C}$ to $B(z,1)$ is equal to $1$ then the law one obtains in the $C \to \infty$ limit is that of a $\gamma$-quantum cone.  (The construction given in \cite{sheffield:welding,DMS:mating} is more direct in the sense that a precise recipe is given for sampling from the law of the limiting field.)  That is, a $\gamma$-quantum cone is the infinite-volume $\gamma$-LQG surface which describes the local behavior of an $\gamma$-LQG surface near a $\mu_h$-typical point.

The (unit area) $\gamma$-LQG sphere can also be constructed using a limiting procedure using the ordinary GFF $h$ as above as the starting point.  This construction works by first fixing $C > 0$ large, $\epsilon > 0$ small, and then \textit{conditioning\/} on the event that the amount of mass that $\mu$ assigns to $D$ is in $[e^{\gamma C},e^{\gamma(C+\epsilon)}]$,
so that the amount mass assigned to $D$ by $\mu_{h-C}$ is in $[1,e^{\gamma \epsilon}]$, then sends first $C \to \infty$ and then $\epsilon \to 0$.  (The constructions given in \cite{DMS:mating,miller-sheffield:finite-trees} are more direct because they involve precise recipes for sampling from the law of the limiting~$h$.)  One can visualize this construction by imagining that conditioning the area to be large (while keeping the boundary values of $h$ constrained to be $0$) leads to the formation of large a bubble.  In the $C \to \infty$ limit, the opening of the bubble (which is the boundary of the domain) collapses to a single point, and it turns out that this point is typical (i.e., conditioned on the rest of the surface its law is given by that of the associated $\gamma$-LQG measure).

In \cite{DMS:mating,miller-sheffield:finite-trees}, it is shown that it is possible to represent various types of $\gamma$-LQG surfaces (cones, spheres, and disks) decorated by an independent $\SLE$ as a gluing of a pair of continuous trees.  We first explain a version of this construction in which $\gamma=\sqrt{2}$ and the surface is a unit-area LQG sphere decorated with an independent $\SLE_8$.  Let $X$ and $Y$ be independent one-dimensional Brownian excursions parametrized by $[0,1]$.
Let $C$ be large enough so that the graphs of $X$ and $C-Y$ are disjoint, as illustrated in Figure~\ref{fig::lamination}.
We define an equivalence relation~$\sim$ on the rectangle $R = [0,1] \times [0,C]$ by declaring to be equivalent points which lie on either:
\begin{enumerate}
\item horizontal chords either entirely below the graph of $X$ or entirely above graph of $C-Y$ (green lines in Figure~\ref{fig::lamination}), or
\item vertical chords between the graphs of $X$ and $C-Y$ (red lines in Figure~\ref{fig::lamination}).
\end{enumerate}
We note that under $\sim$, all of $\partial R$ is equivalent so we may think of $\sim$ as an equivalence relation on the two-dimensional sphere~$\s^2$.  It is elementary to check using Moore's theorem \cite{moore:spheres} (as explained in \cite[Section~1.1]{DMS:mating}) that almost surely the topological structure associated with $R/\!\sim$ is homeomorphic to $\s^2$.  This sphere comes with additional structure, namely:
\begin{enumerate}
\item a space-filling path\footnote{As explained just below, $\eta'$ is related to an $\SLE_{\kappa'}$ curve with $\kappa' > 4$.  We use the convention here from \cite{miller-sheffield:ig1,miller-sheffield:ig2,miller-sheffield:ig3,miller-sheffield:ig4}, which is to use a prime whenever $\kappa' > 4$.} $\eta'$ (corresponding to the projection of the path which follows the red lines in Figure~\ref{fig::lamination} from left to right), and
\item a measure $\mu$ (corresponding to the projection of Lebesgue measure on $[0,1]$).
\end{enumerate}
We refer to this type of structure as a \textit{peanosphere}, as it is a topological sphere decorated with a path which is the peano curve associated with a space-filling tree.

The peanosphere associated with the pair $(X,Y)$ does not \textit{a priori\/} come with an embedding into the Euclidean sphere $\s^2$.  However, it is shown in \cite{DMS:mating, miller-sheffield:finite-trees} that there is a canonical embedding (up to M\"obius transformations)
of the peanosphere associated with $(X,Y)$ into $\s^2$, which is measurable with respect to $(X,Y)$.
This embedding equips the peanosphere with a conformal structure.  The image of $\mu$ under this embedding is a $\sqrt{2}$-LQG sphere, see \cite{DMS:mating, miller-sheffield:finite-trees} as well as \cite{DKRV:sphere,AHS:twoperspectives}), and the law of the space-filling path $\eta'$ is the following natural version of $\SLE_8$ in this context \cite{miller-sheffield:ig4}:  If we parametrize the $\sqrt{2}$-LQG sphere by the Riemann sphere $\wh{\C}$, then $\eta'$ is equal to the weak limit of the law of an $\SLE_8$ on $B(0,n)$ from $-i n$ to $i n$ with respect to the topology of local uniform convergence when parametrized by Lebesgue measure.  (The construction given in \cite{miller-sheffield:ig4} is different and is based on the GFF.)  The random path $\eta'$ and the random measure $\mu$ are coupled together in a simple way.  Namely, given $\mu$, one samples from the law of the path by first sampling an $\SLE_8$ (modulo time parametrization) independently of $\mu$ and then reparametrizing it according to $\mu$-area (so that in $t$ units of time it fills $t$ units of $\mu$-area).

This construction generalizes to all values of $\kappa' \in (4,\infty)$.  In the more general setting, we have that $\gamma = \sqrt{\kappa}$ where $\kappa = 16/\kappa' \in (0,4)$, and the pair of independent Brownian excursions is replaced with a continuous process $(X,Y)$ from $[0,1]$ into $\R_{\geq 0}^2$ which is given by the linear image of a two-dimensional Brownian excursion from the origin to the origin in the Euclidean wedge of opening angle
\[ \theta = \frac{\pi \gamma^2}{4} = \frac{\pi \kappa}{4} = \frac{4\pi}{\kappa'}\,\]
see \cite{DMS:mating, miller-sheffield:finite-trees, GHMS:covariance}.
(In the infinite-volume version of the peanosphere construction, the Brownian excursions $(X,Y)$ are replaced with Brownian motions, and the corresponding underlying quantum surface is a $\gamma$-quantum cone~\cite{DMS:mating}.)

The main results of \cite{DMS:mating, miller-sheffield:finite-trees} imply that the information contained in the pair $(X,Y)$ is a.s.\ \textit{equivalent\/} to that of the associated $\SLE_{\kappa'}$-decorated $\gamma$-LQG surface.  More precisely, the map $f$ from $\SLE_{\kappa'}$-decorated $\gamma$-LQG surfaces to Brownian excursions is almost everywhere well-defined and almost everywhere invertible, and both $f$ and $f^{-1}$ are measurable.

The peanosphere construction leads to a natural topology on surfaces which can be represented as a gluing of a pair of trees (a space-filling tree and a dual tree), as illustrated in Figure~\ref{fig::lamination}.  Namely, such a tree-decorated surface is encoded by a pair of continuous functions $(X,Y)$ where $X$ (resp.\ $Y$) is given by the interface function of the tree (resp.\ dual tree) on the surface.  We recall that the interface function records the distance of a point on the tree to the root when one traces its boundary with unit speed.  We emphasize that both continuum and discrete tree-decorated surfaces can be described in this way.  In the case of a planar map, we view each edge as a copy of the unit interval and use this to define ``speed.''  Equivalently, one can consider the discrete-time interface function and then extend it to the continuum using piecewise linear interpolation.  Applying a rescaling to the planar map corresponds to applying a rescaling to the discrete pair of trees, hence their interface functions.  If we have two tree-decorated surfaces with associated pairs of interface functions $(X,Y)$ and $(X',Y')$, then we define the distance between the two surfaces simply to be the sup-norm distance between $(X,Y)$ and $(X',Y')$.  That is, the peanosphere topology is the restriction of the sup-norm metric to the space of pairs of continuous functions which arise as the interface functions associated with tree-decorated surfaces.

The peanosphere approach to SLE/LQG convergence (i.e., identifying a natural pair of trees in the discrete model and proving convergence in the topology where two configurations are close if their tree interface functions are close) was introduced in \cite{sheffield:burgers,DMS:mating} to deal with infinite-volume limits of FK-cluster-decorated random planar maps, which correspond to $\kappa \in [2,4)$ and $\kappa' \in (4,8]$. Extensions to the finite volume case and a ``loop structure'' topology appear in \cite{GMS:cone_times,gwynne-sun:finite_volume,gwynne-sun:finite, gwynne-miller:topology}.

Since bipolar-oriented planar maps converge in the peanosphere topology to $\SLE_{12}$-decorated $\sqrt{4/3}$-LQG,
we conjecture that they also converge in other natural topologies, such as
\begin{itemize}
\item The \textit{conformal path topology\/} defined as follows. Assume we have selected a method of ``conformally embedding'' discrete planar maps in the sphere. (This might involve circle packing, Riemann uniformization, or some other method.) Then the green path in Figure~\ref{fig::ao} becomes an actual path: a function $\eta_n$ from $[0,1]$ to the unit sphere (where $n$ is the number of lattice steps) parameterized so that at time $k/n$ the path finishes traversing its $k$th edge. An $\SLE_{12}$-decorated $\sqrt{4/3}$-LQG sphere can be described similarly by letting $\eta$ be the $\SLE$ path parameterized so that a $t$ fraction of LQG volume is traversed between times $0$ and $t$. (Note that the parameterized path $\eta$ encodes both the LQG measure \textit{and\/} the $\SLE$ path.) The conformal path topology is the uniform topology on the set of paths from $[0,1]$ to the sphere. The conjecture is that $\eta_n$ converges to $\eta$ weakly w.r.t.\ the uniform topology on paths. See \cite{duplantier-sheffield:KPZ, sheffield:welding} for other conjectures of this type.
\item The Gromov--Hausdorff--Prokhorov--uniform topology on metric measure spaces decorated with a curve.  So far, convergence in this topology has only proved in the setting of a uniformly random planar map decorated by a self-avoiding walk (SAW) to $\SLE_{8/3}$ on $\sqrt{8/3}$-LQG in \cite{gwynne-miller:saw,gwynne-miller:gluing,gwynne-miller:uihpq}.  These works use as input the convergence of uniformly random planar maps to the Brownian map \cite{legall:bm,miermont:bm} and the construction of the metric space structure of $\sqrt{8/3}$-LQG \cite{miller-sheffield:qle,miller-sheffield:axiomatic,miller-sheffield:metric,miller-sheffield:continuity,miller-sheffield:determined,miller-sheffield:finite-trees}.  It is still an open problem to endow $\gamma$-LQG with a canonical metric space structure for $\gamma \neq \sqrt{8/3}$ and to prove this type of convergence result for random planar maps with other models from statistical physics.
\end{itemize}

An interesting problem which illustrates some of the convergence issues that arise is the following: In the discrete setting, the interface functions between the NW and SE trees determine the bipolar map which in turn determine the interface functions between the NE and SW trees.  Likewise, in the continuous setting, the interface functions (a Brownian excursion) between the NW and SE trees a.s.\ determine the SLE-decorated LQG which in turn a.s.\ determine the interface function (another Brownian excursion) between the NE and SW trees.
\begin{conjecture}
The joint law of both NW/SE and NE/SW interface functions of a random bipolar-oriented planar map converges to the joint law of both NW/SE and NE/SW interface functions of $\SLE_{12}$-decorated $\sqrt{4/3}$-LQG.
\end{conjecture}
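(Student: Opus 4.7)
The plan is to upgrade Theorem~\ref{scalinglimit} from convergence of a single pair of interface functions to convergence of the joint pair, by combining tightness with a rigidity argument coming from \cite{DMS:mating, miller-sheffield:finite-trees, GHMS:covariance} and the imaginary geometry coupling of \cite{miller-sheffield:ig1,miller-sheffield:ig2,miller-sheffield:ig3,miller-sheffield:ig4}. Let $(X_n,Y_n)$ denote the rescaled NW/SE interface function of a uniformly (or appropriately weighted) bipolar-oriented map with $n$ edges, and let $(\widetilde X_n,\widetilde Y_n)$ denote the rescaled NE/SW interface function, both viewed as elements of $C([0,1],\R_{\geq 0}^2)$.

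First, I would establish marginal convergence of each pair. Theorem~\ref{scalinglimit} handles $(X_n,Y_n)$ directly. The reflection of the map across the S-N axis is a bijection on bipolar-oriented maps that preserves the law (because the weightings in Theorem~\ref{thm:weighted-maps} depend only on face sizes) and exchanges NW $\leftrightarrow$ NE and SW $\leftrightarrow$ SE. Applying Theorem~\ref{scalinglimit} to the reflected map yields that $(\widetilde X_n,\widetilde Y_n)$ converges to an independent copy (in distribution, not jointly) of the same correlated Brownian excursion. Since both marginals are tight in the uniform topology, the joint law $(X_n,Y_n,\widetilde X_n,\widetilde Y_n)$ is tight on $C([0,1],\R_{\geq 0}^4)$.

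Next comes the identification of subsequential limits, which is the heart of the argument. Take any subsequential limit $(X,Y,\widetilde X,\widetilde Y)$. By \cite[Theorem~1.13]{DMS:mating}, the pair $(X,Y)$ almost surely determines a $\sqrt{4/3}$-LQG sphere decorated by an independent space-filling $\SLE_{12}$ curve $\eta'$; call this decorated surface $\mathcal S$. Similarly, $(\widetilde X,\widetilde Y)$ almost surely determines a decorated surface $\widetilde{\mathcal S}$. I would then argue that $\mathcal S = \widetilde{\mathcal S}$ almost surely, and that the SLE curves encoded by the two pairs are the two imaginary-geometry flow-line-pair curves (at angles corresponding to NW/SE versus NE/SW) of a common Gaussian free field on $\mathcal S$. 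Given that identification, the joint law $(X,Y,\widetilde X,\widetilde Y)$ is uniquely determined by the law of $\mathcal S$ together with the deterministic rule that extracts both interface functions from its imaginary geometry, completing the uniqueness of subsequential limits and hence the convergence.

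The principal obstacle is the identification $\mathcal S = \widetilde{\mathcal S}$ together with the imaginary-geometry coupling of the two curves in the limit. The peanosphere topology is weak: it records only contour functions, not the embedded map, so one cannot simply pass to a limit and read off the second interface. A natural route is to enhance the topology. One would introduce an auxiliary observable that, on the discrete side, is jointly determined by the map and, in the continuum, can be expressed as a measurable function of $\mathcal S$ equipped with its GFF. Candidates include: (i) the lengths and winding structure of the NE/SW tree branches from a $\mu$-typical vertex, which by the local i.i.d.\ statement in Theorem~\ref{thm:weighted-maps} have a well-defined local limit around a uniformly chosen edge that should match the imaginary-geometry computation of flow-line increments at the rotated angles; or (ii) an explicit formula identifying $(\widetilde X,\widetilde Y)$ at a dense collection of times $t$ in terms of functionals of $(X,Y)$ coming from SLE-duality and the GFF coupling, proved discretely via an enumeration argument and passed to the continuum using local convergence. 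Either route reduces the problem to a precise imaginary-geometry computation of how two flow-line pairs at orthogonal angles encode the same quantum surface; once that computation is available, the tightness plus identification outlined above yields the conjecture.
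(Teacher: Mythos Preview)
The statement you are trying to prove is labeled a \emph{Conjecture} in the paper, and the paper contains no proof of it. The surrounding discussion (end of Section~\ref{peanosphere}) explicitly explains why the authors regard it as open: the mating-of-trees map $f$ sending a Brownian excursion to its $\SLE_{12}$-decorated $\sqrt{4/3}$-LQG sphere is measurable but presumably almost-everywhere discontinuous, so closeness of NW/SE interface functions does not imply closeness of the resulting surfaces, and hence does not imply closeness of the induced NE/SW interface functions. The paper also notes (Section~1.3) that the joint convergence of all four trees was established in the subsequent article \cite{GNS:bipolar}.

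Your proposal correctly lays out the standard tightness-plus-identification scheme and correctly identifies the marginal convergence of $(\widetilde X_n,\widetilde Y_n)$ via the orientation-reversing symmetry; tightness of the quadruple then follows. The genuine gap is exactly the step you flag as ``the principal obstacle'': showing that in any subsequential limit $(X,Y,\widetilde X,\widetilde Y)$ the surface $\widetilde{\mathcal S}$ encoded by $(\widetilde X,\widetilde Y)$ coincides with $\mathcal S$, and that the second curve is the imaginary-geometry rotation of the first. Neither of your two candidate approaches is an argument. Candidate (i) appeals to the local i.i.d.\ structure of Theorem~\ref{thm:weighted-maps}, but local information about branch increments near a typical edge cannot by itself pin down the global coupling of two space-filling curves; in particular it gives no handle on why the two limiting peanospheres share the same LQG measure. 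Candidate (ii) posits ``an explicit formula'' relating $(\widetilde X,\widetilde Y)$ to $(X,Y)$ at dense times, but no such formula is supplied, and producing one is essentially the content of the conjecture: on the discrete side the NE/SW function is a highly nonlocal, non-continuous functional of the NW/SE function, and the difficulty is precisely that this functional does not pass to the limit without additional input. In short, your outline reproduces the paper's own heuristic and its stated obstruction, but does not go beyond it.
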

One might expect to be able to approximate the discrete NW/SE interface function with a continuous function, obtain the corresponding continuous NE/SW function, and hope that this approximates the discrete NE/SW function.
One problem with this approach is that while the maps $f^{-1}$ and $f$ are measurable, they are (presumably almost everywhere) discontinuous,
so that even if two interface functions are close, it does not follow that the corresponding measures and paths are close.
However, since Brownian excursions are random perturbations rather than ``worst case'' perturbations of random walk excursions,
we expect the joint laws to converge despite the discontinuities of $f$ and $f^{-1}$.

\section{Imaginary geometry: why \texorpdfstring{$\kappa' = 12$}{\textkappa'=12} is special}\label{imaginarysection}
When proving that a family of discrete random curves has $\SLE_\kappa$ as a scaling limit, it is sometimes possible to figure out in advance what $\kappa$ should be by proving that there is only one $\kappa$ for which $\SLE_\kappa$ has some special symmetry. For example, it is by now well known that $\SLE_6$ is the only $\SLE$ curve with a certain \textit{locality\/} property (expected of any critical percolation interface scaling limit) and
that
$\SLE_{8/3}$ is the only $\SLE$ curve with with a certain \textit{restriction\/} property (expected of any self-avoiding-walk scaling limit).
The purpose of this section is to use the imaginary geometry theory of \cite{miller-sheffield:ig1,miller-sheffield:ig4} to explain what is special about the values $\kappa=4/3$ and $\kappa'=12$.

\subsection{Winding height gap for uniform spanning trees}
The connections between winding height functions, statistical
mechanics models, and height gaps are nicely illustrated in the
discrete setting by the uniform spanning tree (UST).  Temperley showed
that spanning trees on the square grid are in bijective correspondence
with dimer configurations (perfect matchings) on a larger square grid.  Dimer configurations have a height function which is
known to converge to the Gaussian free field \cite{dominoconformal}.
Under the Temperley correspondence, this dimer height function is related to the ``winding'' of the spanning tree, where the winding of a given edge in the tree is defined to be the number of right turns minus the number of left turns taken by the tree path from that edge to the root \cite{KPW}.  If we multiply the dimer
heights by $\pi/2$,  then this function describes the accumulated amount of angle by which the path has turned right on its journey toward the root.

Notice that if $v$ is a vertex along a branch of a spanning tree, and there are two tree edges off of that branch that merge into the vertex $v$ from opposite directions, then the winding height at the
edge just to the right of the branch is larger by $\pi$ than at
the edge just to the left, see Figure~\ref{heightgap-ust}.  Because of this, it is intuitively natural to expect that there will typically be a ``winding height gap'' across the long tree branch of magnitude $\pi$, i.e., the winding just right of the long tree branch should be (on average) larger by $\pi$ than the winding just left of the tree branch.

\begin {figure}[ht!]
\begin {center}
\includegraphics[scale=.9]{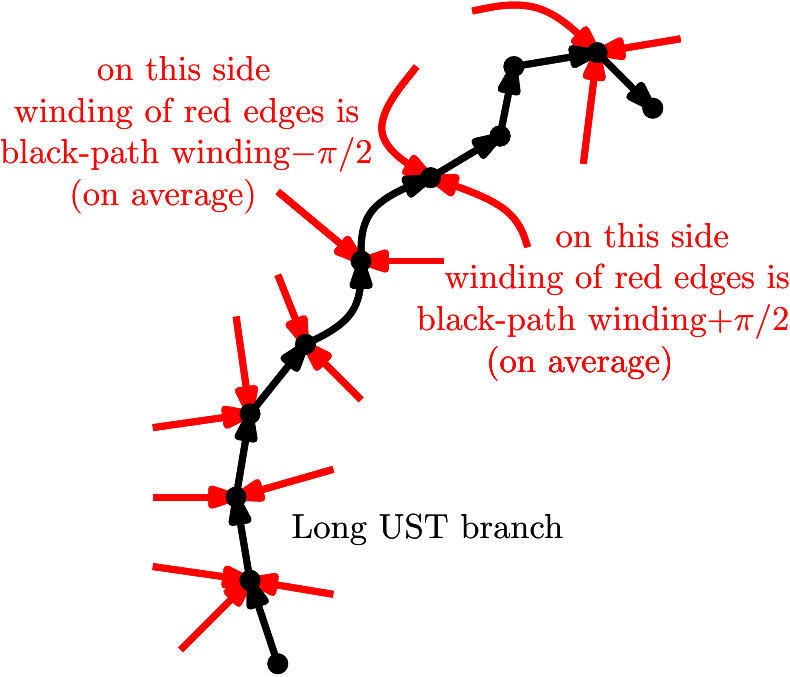}
\caption {\label{heightgap-ust} Every edge in a uniform spanning tree is assigned a ``winding'' --- a real number that indicates the total amount of right turning (minus left turning) that takes place as one moves along the
tree
from the midpoint of that edge to the root.  The black directed edges are those of a long branch in the UST directed towards the root, and adjacent spanning tree edges are shown in red.
Given the long black path, the conditional law of the configuration on the left side of the path does not depend on the orientation of the long black path.  Thus, by symmetry, one expects (on average) a $\pi/2$ angle gap between red edges and their black neighbors.  This accounts for a total ``average winding gap'' of $\pi$ between the left and right side.}
\end {center}
\end {figure}

\subsection{Winding height gap for SLE}
Imaginary geometry extends these notions of winding height function and the height gap to SLE.
The starting point is an instance $h$ of the GFF, which we divide by a parameter~$\chi>0$ to convert
into units of radians.  $\SLE_\kappa$ can be constructed as a flow line of the vector field
in which $z$ is assigned the complex unit vector $e^{i h/\chi}$,
where
\[\chi = \frac{2}{\sqrt{\kappa}} - \frac{\sqrt{\kappa}}{2}\,.\]
Although this vector field does not make literal sense, as $h$ is a distribution and not a function, one can still construct the flow lines in a natural way \cite{miller-sheffield:ig1,miller-sheffield:ig4}.
While it has been conjectured that these GFF flow lines are
limits of flow lines of mollified versions of the GFF, their rigorous construction follows a
different route. One first proves that they are the unique paths
coupled with the GFF that satisfy certain axiomatic properties
(regarding the conditional law of the field given the path), and then
establishes \textit{a posteriori\/} that the paths are uniquely determined by the GFF.

We interpret flow lines of $e^{i h/\chi}$ as ``east going'' rays in an ``imaginary geometry''.  A ray of a different angle $\theta$ is a flow line of $e^{ i [ h/\chi + \theta] }$.
In contrast to Euclidean geometry, the rays of different angles started from the same point
may intersect each other.  There is a critical angle $\theta_c$ given by
\[\theta_c = \frac{\pi \kappa}{4-\kappa} = \frac{4 \pi}{\kappa' - 4}\,,\]
such that the flow lines of $e^{i h/\chi}$ and $e^{ i [ h/\chi + \theta] }$ started from a common point
a.s.\ intersect when $\theta<\theta_c$, and a.s.\ do not intersect when $\theta \geq \theta_c$.
If we condition on a flow line~$\eta$, there is a winding height gap in the GFF, in the sense
that $\E[h/\chi\mid\eta]$ just to the right of $\eta$ is larger by $\theta_c$ than the value just to the left of $\eta$.

The east-going flow lines of $e^{i h/\chi}$ started from different points can intersect,
at which point they merge.  The collection of east-going rays started from all points together form a
continuum spanning tree, whose branches are $\SLE_\kappa$'s.  There is a space-filling curve which is
the analog of the UST peano curve, which traces the boundary of this spanning tree, and is a space-filling
version of $\SLE_{\kappa'}$ with $\kappa'=16/\kappa$.

When $\kappa = 4 n /(n+1)$ the critical angle is $\theta_c = n \pi$.
The well-known ``special values'' of $\kappa$ have in the past corresponded to integer values of $n$,
together with the limiting case $n\to\infty$.
For example, $n \in \{1,2,3,5,\infty\}$ gives $\kappa \in \{2, 8/3, 3, 10/3, 4 \}$ and $\kappa' \in \{8, 6, 16/3, 24/5, 4 \}$.
In the case $\kappa' = 12$ and $\kappa = 4/3$, we have $n = 1/2$.

\subsection{Bipolar winding height gap should be \texorpdfstring{$\pi/2$}{\textpi/2}}

To make sense of winding angle in the context of a planar map, one may
view the map as a Riemannian surface obtained by interpreting the
faces as regular unit polygons glued together, and then conformally
map that surface, as in Figure~\ref{planarmapfig}.
\begin{figure}[b!]
\begin{center}
\includegraphics [width=3.5in]{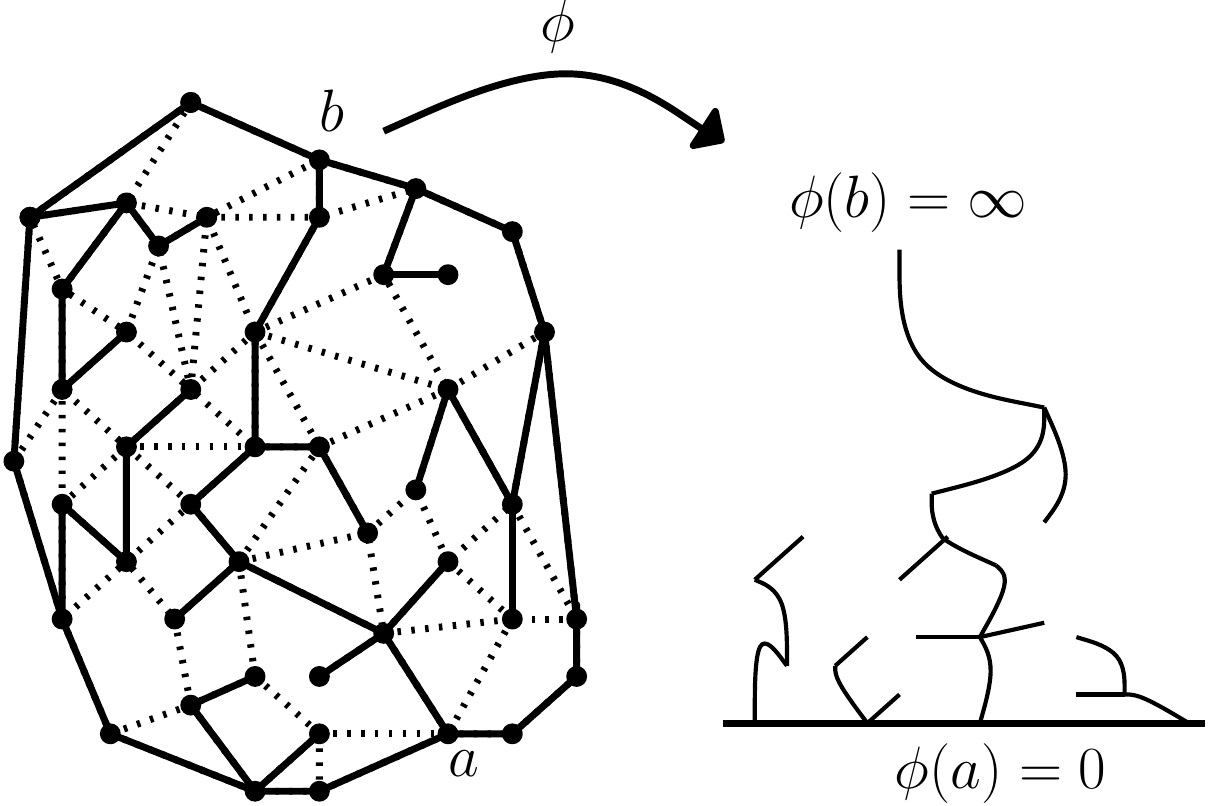}
\end{center}
\caption {\label{planarmapfig} Planar map with a distinguished outer-boundary-plus-one-chord-rooted spanning tree (solid black edges), with chord joining marked boundary points $a$ and $b$, plus image of tree under conformally uniformizing map $\phi$ to $\mathbb H$ (sketch).  (This figure first appeared in \cite{sheffield:welding}.)}
\end{figure}
If a random planar map is decorated by a bipolar orientation, we can assign a winding to every edge that indicates the total amount of right turning (minus left turning) that takes place as one moves along \textit{any\/} north-going path (it doesn't matter which one) from the midpoint of that edge to the north pole.

Consider a NW path started from a vertex incident to the eastern face continued to a vertex incident to the western face.
The portion of the bipolar-oriented map that is south of this path may be east-west reflected to obtain a new bipolar-oriented
planar map.  If this NW path is suitably chosen, so that it is determined by the portion of the map north of it,
then reflecting the portion of the map south of it is a bijection.

This east-west symmetry suggests that the edges to the left of a long NW path have a winding which is, on average,
$\pi/2$ less than the winding of the NW path (as in Figure~\ref{heightgap-bipolar}).

The bipolar-oriented map to the right of the NW path does not have this same reflection symmetry.  Indeed, to the right of the NW path (but not the left), there can be other north-going paths that split off the NW path and rejoin the NW path at another vertex.  Reflecting the bipolar map on the right side of the NW path would then create a cycle.

However, if we reflect the map to the right of the NW path and then reverse the orientations of the edges, no cycles are created, and no new sources or sinks are created except at the endpoints of the NW path.  Thus for a long NW path, we expect the bipolar-oriented map to the right of the path to be approximately reflection-reversal symmetric.

Edges to the right of the NW path may be oriented either toward or away from the path; one expects those oriented away from the path to have smaller winding on average and those oriented toward the path to have larger winding on average. However, by the approximate reflection-reversal symmetry, these two effects should cancel, so that overall there is no expected angle gap between the black path and the red edges to its right (as in Figure~\ref{heightgap-bipolar}).
\enlargethispage{24pt}
\begin {figure}[htbp!]
\begin {center}
\includegraphics[scale=.9]{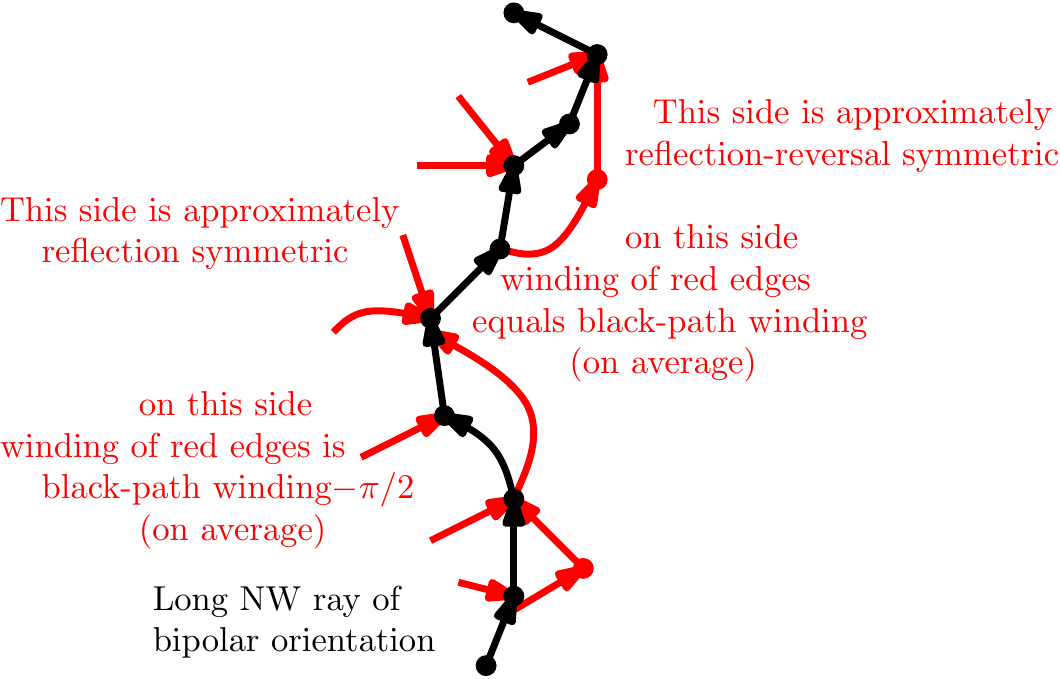}
\caption {\label{heightgap-bipolar}The average height gap on either side of a long NW ray (black) of a bipolar orientation.
On the western side the average height gap is $-\pi/2$, on the eastern side it is $0$.
}
\end {center}
\end {figure}

\section{Open question}

In addition to questions regarding strengthening the topology of convergence, which are discussed at the end of Section~\ref{peanosphere}, it would be interesting to extend the theory to other surface graphs, such as the torus, or a disk with four boundary vertices which are alternately source, sink, source, sink.

\bibliographystyle{hmralphaabbrv}
\addcontentsline{toc}{section}{References}
\small
\bibliography{../bipolar}

\end{document}